\theoremstyle{plain}
\newtheorem{thm}{Theorem}[section]
\newtheorem{prop}[thm]{Proposition}
\newtheorem{lemma}[thm]{Lemma}
\newtheorem{remark}[thm]{Remark}
\theoremstyle{definition}
\newtheorem{df}[thm]{Definition}
\newtheorem{q}[thm]{Question}
\newcommand{\N}{\mathbb{N}}
\newcommand{\norm}[1]{\left\Vert #1\right\Vert}
\definecolor{darkgreen}{rgb}{.2, .6, .2}
\newcommand{\KB}[0]{Kreiss bounded }
\newcommand{\UKB}[0]{uniformly Kreiss bounded }
\newcommand{\CB}[0]{Ces\`aro bounded }
\newcommand{\SCB}[0]{strongly Ces\`aro bounded }
\newcommand{\Tt}[0]{$(T_t)_{t\ge0}$}
\begin{document}
	
	\title[Growth rate of eventually positive Kreiss bounded $C_0$-semigroups on $L^p$ and $\mathcal{C}(K)$]{Growth rate of eventually positive Kreiss bounded $C_0$-semigroups on $L^p$ and $\mathcal{C}(K)$}
	
	\author[L. Arnold]{Loris Arnold}
	
	\author[C. Coine]{Cl\'ement Coine}
	
	\address[C. Coine]{Normandie Univ, UNICAEN, CNRS, LMNO, 14000 Caen, France}
	\email{clement.coine@unicaen.fr}
	
	\address[L. Arnold]{Jana I Jedrzeja \'Sniadeckich 8, 00-656 Warszawa, Poland}
	\email{larnold@impan.pl}

	\date{}

	\begin{abstract}
In this paper, we compare several Ces\`aro and Kreiss type boundedness conditions for a $C_0$-semigroup on a Banach space and we show that those conditions are all equivalent for a positive semigroup on a Banach lattice. Furthermore, we give an estimate of the growth rate of a Kreiss bounded and eventually positive $C_0$-semigroup $(T_t)_{t\ge 0}$ on certain Banach lattices $X$. We prove that if $X$ is an $L^p$-space, $1<p<+\infty$, then $\|T_t\| = \mathcal{O}\left(t/\log(t)^{\max(1/p,1/p')}\right)$ and if $X$ is an $(\text{AL})$ or $(\text{AM})$-space, then $\|T_t\|=\mathcal{O}(t^{1-\epsilon})$ for some $\epsilon \in (0,1)$, improving previous estimates.
	\end{abstract}

	% KEYWORDS
	\subjclass[2020]{Primary 47D06; Secondary 34D05, 35B40}
	
	%47A10 Spectrum, resolvent
	%47B01 Operators on Banach spaces

	\keywords{$C_0$-semigroup, Positive semigroup, Kreiss condition}
	
	\maketitle

	\section{Introduction}

	Let $(T_t)_{t\ge 0}$ be a $C_0$-semigroup on a complex Banach space $X$ and denote by $-A$ its generator. We say that $(T_t)_{t\ge 0}$ is Kreiss bounded if $\mathbb{C}_- \subseteq \rho(A)$ (the resolvent set of $A$) and for every $\lambda \in \mathbb{C}_+$,
	\[
	\norm{R(-\lambda,A)} \leq \frac{C}{\text{Re}(\lambda)},
	\]
	where, as usual, $\mathbb{C}_+ = \left\lbrace \lambda \in \mathbb{C}, \ \text{Re}(\lambda) > 0 \right\rbrace$ and $R(\mu,A)=(\mu-A)^{-1}$ is the resolvent of $A$ at $\mu\in \rho(A)$.
	Recall that if $X$ is a Banach lattice, we say that $(T_t)_{t\geq 0}$ is positive if, for every $t\geq 0$, $T_t$ is a positive operator, that is
	$$
	\forall t\geq 0, \ 0\leq f \in X \implies 0\leq T_tf.
	$$
	More generally, we say that $(T_t)_{t\ge 0}$ is individually eventually positive if for every $x\in X_+$ (the positive cone of $X$), there exists $t_x \geq 0$ such that, for every $t\geq t_x$, $T_tx$ is positive. We say that $(T_t)_{t\ge 0}$ is uniformly eventually positive if there exists $t_0 \geq 0$ such that for every $t\geq t_0$, $T_t$ is positive. There are examples of concrete differential equations associated to operators, such as certain Dirichlet-to-Neumann operators, which generate uniformly eventually positive but not positive $C_0$-semigroups. See \cite{Daners} and \cite{Danersgluck} for examples and complementary informations.

In this paper, a Banach lattice is the complexification of a real Banach lattice. We denote by $X_{\mathbb{R}}$ its real part, so that, by abusing the notation, $X = X_{\mathbb{R}} + i X_{\mathbb{R}}$. We refer to \cite[Section C-I 7.]{Ardentbook} and \cite[Section 2.2]{Meyer-N} for more details. In such space, the positive cone $X_+$ spans the entire space $X$. Examples of Banach lattices are $L^p(\Omega,\mu), 1\leq p\leq +\infty$ for some measure space $(\Omega,\mathcal{F},\mu)$ and the space $\mathcal{C}(K)$ where $K$ is a compact space.
%or $\mathcal{C}_0(L)$ where $L$ is a locally compact topological space.
In fact, the space $L^1$ on one hand, and $\mathcal{C}(K)$ on the other hand, are, respectively, examples of Banach lattices called $(\text{AL})$ and $(\text{AM})$-spaces. A Banach lattice $X$ is an $(\text{AL})$-space if the norm on $X$ satisfies
$$
\forall x,y \in X_+, \ \| x+y \| = \|x\| + \|y\|,
$$
that is, the norm is additive on the positive cone of $X$. A Banach lattice $X$ is an $(\text{AM})$-space if the norm on $X$ satisfies
$$
\forall x,y \in X_+, \ \| \sup(x,y) \| = \sup(\|x\|,\|y\|).
$$
If $X$ is a real $(\text{AM})$-space, then, by \cite[Proposition 1.4.7]{Meyer-N}, $X^*$ is an $(\text{AL})$-space. Moreover, by \cite[Section 2.2]{Meyer-N}, the dual of the complexification of $X$ can be identified with the complexification of $X^*$. In particular, if $X$ is a complex $(\text{AM})$-space, its dual space is an $(\text{AL})$-space. We will use this fact in Section \ref{Section4}.

%we say that $u\in X_+$ is a unit if the unit ball of $X$ is equal to the order interval $[-u,u]$. Then, by \cite[Chapter V, Theorem 8.4]{Schaefer}, the dual of an $(\text{AM})$-space with unit is an $(\text{AL})$-space, and the dual of an $(\text{AL})$-space is an $(\text{AM})$-space with unit. 

	In this paper, we give an estimate for the growth rate of $\|T_t\|$ when $(T_t)_{t\ge 0}$ is Kreiss bounded and eventually positive on a Banach lattice $X$ where $X$ is either an $L^p$-space, an $(\text{AL})$-space or an $(\text{AM})$-space. Note that the Kreiss boundedness condition corresponds to the first order estimate in the Hille-Yosida theorem which characterizes bounded semigroups. When $X$ is finite dimensional, a Kreiss bounded semigroup is bounded but this is not the case if $X$ is infinite dimensional, even when $X$ is a Hilbert space, see \cite{EisnerZwart2}.
	
	Denote by $\omega_0(T)$ the growth rate of $T=(T_t)_{t\ge 0}$ that is, $\omega_0(T)$ is the infimum over all $\omega \in \mathbb{R}$ for which there exists $M\geq 1$ such that, for every $t\geq 0, \ \|T_t\| \leq Me^{\omega t}$. If $B$ is a closed operator on $X$, we define its spectral bound $s(B)$ by
	$$
	s(B) = \sup \left\lbrace \text{Re}(\lambda) \ : \ \lambda \in \sigma(B) \right\rbrace.
	$$
	It is well known that we have
	$$
	-\infty \leq s(-A) \leq \omega_0(T) < +\infty
	$$
	and that the inequality $s(-A) \leq \omega_0(T)$ can be strict, see \cite[Example 4.2]{Greiner}. However, for a uniformly eventually positive $C_0$-semigroup on $L^p, \, 1\leq p <+\infty$, we have equality, see \cite{Weis95}, \cite{Weis98} and \cite{Vogt}. We also have equality if $(T_t)_{t\ge 0}$ is individually eventually positive on an $(\text{AM})$-space, see \cite[Theorem 4]{AroGlu}. For a Kreiss bounded semigroup, we have, by definition, that $\sigma(A) \subset \overline{\mathbb{C}_+}$ so that $s(-A) \leq 0$. Hence, if $(T_t)_{t\ge 0}$ is positive on $L^p$ or on an $(\text{AM})$-space, then $\omega_0(T) \leq 0$ which implies that $(T_t)_{t\geq 0}$ cannot have an exponential growth.
	
	Our main results are the following. First, we prove that if $(T_t)_{t\ge 0}$ is a Kreiss bounded and uniformly eventually positive $C_0$-semigroup on $L^p, \, 1 < p <+\infty$, then $\norm{T_t}= \mathcal{O}\left( \dfrac{t}{\log(t)^{\max\{1/p,1/p'\}}} \right).$ Second, if $(T_t)_{t\ge 0}$ is a Kreiss bounded and individually (respectively uniformly) eventually positive on an $(\text{AL})$-space (resp. on an $(\text{AM})$-space),
 %for which the real part of the space has a unit
we prove the stronger estimate $\norm{T_t}=\mathcal{O}(t^{1-\epsilon})$ for some $\epsilon \in (0,1)$.
	This result improves earlier estimates such as in \cite{RozVer}, where the authors prove that if $(T_t)_{t\ge 0}$ is Kreiss and positive on $X=L^p$ or $C_b(\Omega)$, $\|T_t\| = \mathcal{O}(t)$. Note that it is important for $X$ to be any of the spaces defined above. For example, in \cite[Example 3.1]{EisnerZwart2}, the authors construct a positive Kreiss bounded semigroup with exponential growth. However, when $X$ is a Hilbert space, it has been proven recently by the first author of this paper that a Kreiss bounded semigroup $(T_t)_{t\ge 0}$ (not necessarily positive) on $X$  satisfies the estimate $\norm{T_t}= \mathcal{O}\left( t/\sqrt{\log(t)} \right)$, see \cite{Loris}.
	
The paper is organized as follows. In Section $2$., we prove that an individually eventually positive $C_0$-semigroup $(T_t)_{t\ge 0}$ is Kreiss bounded if and only if it is Ces\`aro bounded, that is, for every $x\in X$ and every $t>0$,
		\[
		\frac{1}{t}\norm{\int_{0}^t T_s x ds} \leq C\|x\|.
		\]
This allows to verify any of the boundedness properties defined above using either the resolvent or the semigroup. We also prove that they are equivalent to stronger conditions, namely, the uniform Kreiss boundedness and the strong Ces\`aro boundedness properties, and equivalent to a weaker condition, the Abel boundedness property.

Next, in a third section, we prove the estimate for the growth rate of a uniformly eventually positive Kreiss bounded semigroup on $L^p$. In order to do so, we first obtain an auxiliary estimate in Proposition \ref{propAlmpACB} which can be applied to the semigroup and the adjoint semigroup. Those inequalities together will allow us to get the desired estimate for the norm of $T_t$ to obtain our first main result, Theorem \ref{maintheorem}. We then discuss the discrete case, namely, we obtain a bound for the growth rate of the powers of a single positive operator, see Theorem \ref{maintheoremdiscrete}.

Finally, in a fourth section, we first prove the estimate for the growth rate of an individually eventually positive Kreiss bounded semigroup on an $(\text{AL})$-space using simple estimates and deduce a similar estimate for uniformly eventually positive Kreiss bounded semigroups on $(\text{AM})$-spaces, using a duality argument.

	%see  Theorem \ref{maintheorem}.

	%see  Theorem \ref{maintheorem}.

	\section{Kreiss and Ces\`aro conditions for $C_0$-semigroups}\label{Section2}
	
	The following definitions are well known and studied in the discrete case, that is, when working with the powers of a single operator, see for instance \cite{Cuny1} and the references therein.
	We start by giving the appropriate definitions of Kreiss and Ces\`aro boundedness properties in the setting of $C_0$-semigroups.

	\begin{df}\label{defNotions}
		Let $(T_t)_{t\ge 0}$ be a $C_0$-semigroup on a Banach space $X$ and let $-A$ be its generator. We say that :
		\begin{enumerate}
			\item  $(T_t)_{t\ge 0}$ is Kreiss bounded if $\mathbb{C}_- \subseteq \rho(A)$ and for every $\lambda \in \mathbb{C}_+$,
			\[
			\norm{R(-\lambda,A)} \leq \frac{C}{\text{Re}(\lambda)}.
			\]
			\item  $(T_t)_{t\geq 0}$ is Abel bounded if $\mathbb{R}_- \subseteq \rho(A)$ and for every $\mu \in \mathbb{R}_+$,
            \[
             \norm{R(-\mu,A)} \leq \frac{C}{\mu}.
            \]
			\item $(T_t)_{t\ge 0}$ is \UKB if, for every $\alpha > 0, \beta\in \mathbb{R}$ and every $t>0$,
			\[
			\norm{\int_{0}^{t} e^{i \beta s }e^{-\alpha s}T_{s}xds } \leq \frac{C\norm{x}}{\alpha}.
			\]
			\item $(T_t)_{t\ge 0}$ is \CB if,  for every $x\in X$ and every $t>0$,
			\[
			\frac{1}{t}\norm{\int_{0}^t T_s x ds} \leq C\|x\|.
			\]
			\item $(T_t)_{t\ge 0}$ is \SCB if for every $x\in X, x^* \in X^*$ and every $t>0$,
			\[
			\frac{1}{t}\int_{0}^t |\langle T_sx ,x^* \rangle| ds \leq C\norm{x}\norm{x^*}.
			\]
		\end{enumerate}
		Here, $C$ denotes a constant which only depends on the semigroup.
	\end{df}
	
	%\begin{remark}  It is readily seen that uniform Kreiss bounded implies Kreiss boundedness and Ces\`aro boundedness. Furthermore, strong Ces\`aro boundedness implies Ces\`aro boundedness. 
	%\end{remark}
	
	\begin{prop}\label{propUKBimplKB}
		Let $(T_t)_{t\ge 0}$ be a $C_0$-semigroup on a Banach space $X$, with generator $-A$. If $(T_t)_{t\ge 0}$ is uniformly Kreiss bounded, then it is Kreiss bounded.
	\end{prop}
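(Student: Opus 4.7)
The plan is to establish in turn that $\mathbb{C}_+ \subseteq \rho(-A)$ and that the Kreiss estimate holds there. The key tool is the identity
\[
(\lambda + A)\, f_\lambda(t)x = x - e^{-\lambda t} T_t x, \qquad f_\lambda(t)x := \int_0^t e^{-\lambda s} T_s x\, ds,
\]
valid for every $x \in X$, $t>0$, and $\lambda \in \mathbb{C}_+$ (with $f_\lambda(t)x \in D(A)$), together with the uniform bound $\|f_\lambda(t)x\| \leq C\|x\|/\text{Re}(\lambda)$ that UKB provides. The main obstacle will be the first step, since UKB does not a priori preclude $(T_t)$ from having exponential growth, so the standard Laplace representation of the resolvent is available only on $\{\text{Re}(\lambda)>\omega_0(T)\}$.

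To rule out approximate point spectrum of $-A$ in $\mathbb{C}_+$, suppose $\lambda \in \mathbb{C}_+ \cap \sigma_{ap}(-A)$ and pick unit vectors $x_n \in D(A)$ with $\epsilon_n := (\lambda+A)x_n \to 0$. The function $v(s) := T_s x_n - e^{\lambda s} x_n$ solves $v' + Av = -e^{\lambda s}\epsilon_n$ with $v(0)=0$, so variation of constants yields the explicit formula $T_s x_n = e^{\lambda s}(x_n - f_\lambda(s)\epsilon_n)$. Substituting into $f_\lambda(t) x_n$,
\[
f_\lambda(t)x_n = t\, x_n - \int_0^t f_\lambda(s)\epsilon_n\, ds,
\]
whose norm is at least $t(1-C\|\epsilon_n\|/\text{Re}(\lambda)) \geq t/2$ for $n$ large. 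This contradicts the UKB bound $\|f_\lambda(t)x_n\|\leq C/\text{Re}(\lambda)$ once $t>2C/\text{Re}(\lambda)$, so $\sigma_{ap}(-A)\cap \mathbb{C}_+ = \emptyset$. Combined with $\partial \sigma(-A) \subseteq \sigma_{ap}(-A)$ and the connectedness of $\mathbb{C}_+$, the set $\sigma(-A) \cap \mathbb{C}_+$ is clopen in $\mathbb{C}_+$; since $\sigma(-A) \subseteq \{\text{Re}(\lambda)\leq \omega_0(T)\}$ cannot exhaust $\mathbb{C}_+$, this intersection is empty, proving $\mathbb{C}_- \subseteq \rho(A)$.

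For the resolvent estimate, rewriting the identity as $R(\lambda,-A)x = f_\lambda(t)x + R(\lambda,-A)\, e^{-\lambda t} T_t x$ and averaging over $t \in [0,T]$ (using $\int_0^T e^{-\lambda t}T_t x\, dt = f_\lambda(T)x$) gives
\[
R(\lambda,-A)x = \frac{1}{T}\int_0^T f_\lambda(t)x\, dt + \frac{1}{T} R(\lambda,-A)\, f_\lambda(T) x.
\]
Estimating both terms by UKB and taking the supremum over $\|x\|=1$ yields $\|R(\lambda,-A)\|(1 - C/(T\,\text{Re}(\lambda))) \leq C/\text{Re}(\lambda)$; letting $T \to \infty$ produces $\|R(-\lambda,A)\| = \|R(\lambda,-A)\| \leq C/\text{Re}(\lambda)$, which is the Kreiss bound.
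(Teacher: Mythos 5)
Your proof is correct, and it takes a genuinely different route from the paper's. The paper works entirely on the resolvent side: it shows that the Laplace transform $z\mapsto\int_0^{+\infty}e^{zs}T_s\,ds$, initially defined on $\{\text{Re}(z)<-\omega_0(T)\}$, extends analytically to the open left half-plane via an integration-by-parts (Abel summation) formula whose convergence is guaranteed by the uniform Kreiss bound; the identity $-f(z)(z-A)=I$ then propagates by analytic continuation, giving both $\mathbb{C}_-\subseteq\rho(A)$ and the estimate in one stroke from the explicit extended formula. You instead split the task: you first exclude $\sigma_{ap}(-A)$ from $\mathbb{C}_+$ by an approximate-eigenvector computation (the identity $T_sx_n=e^{\lambda s}(x_n-f_\lambda(s)\epsilon_n)$, which incidentally follows even more directly from applying $(\lambda+A)f_\lambda(s)$ to $x_n\in D(A)$ and commuting, without invoking variation of constants), then upgrade this to $\mathbb{C}_+\subseteq\rho(-A)$ via $\partial\sigma(-A)\subseteq\sigma_{ap}(-A)$ and connectedness of the half-plane, and finally obtain the bound from the averaged identity $R(\lambda,-A)x=\frac{1}{T}\int_0^T f_\lambda(t)x\,dt+\frac{1}{T}R(\lambda,-A)f_\lambda(T)x$, absorbing the second term as $T\to\infty$. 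All steps check out (the order of quantifiers in the contradiction is fine: fix $t>2C/\text{Re}(\lambda)$, then take $n$ large). The paper's argument is shorter and more self-contained once one accepts the interchange of limits in the double integral; yours is more modular and makes transparent exactly where the uniform Kreiss bound is used — once to kill approximate eigenvalues, once in a self-improving resolvent estimate — at the cost of invoking the standard facts $\partial\sigma\subseteq\sigma_{ap}$ and $\sigma(-A)\subseteq\{\text{Re}(z)\le\omega_0(T)\}$ for the topological step.
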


	\begin{proof}
		We know that $z \mapsto R(z,A)$ is analytic on $\mathcal{U} = \{ z\in \mathbb{C}, \ \text{Re}(z) <  -\omega_0(T) \}$ and that for every $z\in \mathcal{U}$ and every $x\in X$,
		$$
		R(z,A)x = - \int_0^{+\infty} e^{zs} T_sx ds.
		$$
		Define $f : z\mapsto \displaystyle \int_0^{+\infty} e^{zs} T_s ds$. Let us show that $f$ extends to an analytic function on $\mathcal{V} = \{ z\in \mathbb{C}, \ \text{Re}(z) < 0 \}$. Let $\mu>0$ and let us extend $f$ on $\{ \text{Re}(z) < -\mu \}$. Let $z = \alpha + i \beta$ with $\alpha < -\mu$. For $R>0$, integrating by parts gives
		\begin{align*}
			\int_0^R e^{zs}T_s ds
			& = \int_0^R e^{-\mu s} e^{(\alpha+\mu)s} e^{i\beta s} T_s ds \\
			& = \left[ e^{-\mu s} \left(\int_0^s e^{(\alpha+\mu)t} e^{i\beta t} T_t dt \right) \right]_0^R + \mu \int_0^R \left( e^{-\mu s} \int_0^s e^{(\alpha+\mu)t} e^{i\beta t} T_t dt \right) ds \\
			& = e^{-\mu R} \left(\int_0^R e^{(\alpha+\mu)t} e^{i\beta t} T_t dt \right) + \mu \int_0^R \left( e^{-\mu s} \int_0^s e^{(\alpha+\mu)t} e^{i\beta t} T_t dt \right) ds.
		\end{align*}
		The uniform Kreiss bounded property of $(T_t)_{t\ge 0}$ implies that the first term converges to $0$ as $R\to +\infty$ and that the integral from the second term will converge. Taking $R\to +\infty$ yields
		$$
		f(z) = \mu \int_0^{+\infty} \left( e^{-\mu s} \int_0^s e^{(\alpha+\mu)t} e^{i\beta t} T_t dt \right) ds.
		$$
		It is easy to check that this expression is analytic on $\{z=\alpha+i\beta, \ \text{Re}(z) < -\mu \}$. Since this holds for any $\mu>0$, we deduce that $f$ extends analytically on $\mathcal{V}$. Moreover, for every $z\in \mathcal{U}$
		$$
		-f(z)(zI - A) = -(zI - A)f(z) = I
		$$
		and this identity extends to $\mathcal{V}$, that is, for every $z\in \mathcal{V}$, $z\in \rho(A)$ and $f(z) = -R(z,A)$. Finally, for $x\in X$ and $\text{Re}(z) < -\mu$ we have the estimate
		\begin{align*}
			\|R(z, A)x\|
			& = \|f(z)x\| \\
			& \leq \mu \int_0^{+\infty} e^{-\mu s} \left\| \int_0^s e^{(\alpha+\mu)t} e^{i\beta t} T_t x dt \right\| ds \\
			& \leq  \left(\mu \int_0^{+\infty} e^{-\mu s} ds\right) \dfrac{C\norm{x}}{-\alpha-\mu} \\
			& = \dfrac{C\norm{x}}{-\alpha-\mu}.
		\end{align*}
		Letting $\mu\to 0$ yields $\|R(z,A)x\| \leq \dfrac{C\norm{x}}{-\text{Re}(z)}$, and this holds true for every $z\in \mathcal{V}$. This shows that $(T_t)_{t\ge 0}$ is Kreiss bounded.
	\end{proof}
	
	\begin{prop}\label{CNSUKB}
		Let $(T_t)_{t \geq 0}$ be a $C_0$-semigroup on a Banach space $X$. Then, the following conditions are equivalent:
		\begin{enumerate}
			\item $(T_t)_{t \geq 0}$ is uniformly Kreiss bounded.
			\item There exists $C>0$ such that for any $r \in \mathbb{R}$, any $t>0$ and any $x\in X$,
			\[
			\frac{1}{t}\norm{\int_{0}^{t} e^{irs}T_s xds} \leq C\norm{x}.
			\]
		\end{enumerate}
	\end{prop}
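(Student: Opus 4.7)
The plan is to prove both implications via integration by parts, each direction transferring between the Abel-like truncated integral with weight $e^{-\alpha s}$ (characterizing uniform Kreiss boundedness) and the modulated Ces\`aro-type average over $[0,t]$ appearing in (2).

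For the implication $(2) \Rightarrow (1)$, I would fix $\alpha > 0$ and $\beta \in \mathbb{R}$, set $F(s) := \int_{0}^{s} e^{i\beta u} T_u x \, du$, and observe that hypothesis (2) gives $\|F(s)\| \leq C s \|x\|$ for all $s>0$. Writing $e^{-\alpha s} e^{i\beta s} T_s x = e^{-\alpha s} F'(s)$ and integrating by parts yields
\[
\int_{0}^{t} e^{-\alpha s} e^{i\beta s} T_s x \, ds = e^{-\alpha t} F(t) + \alpha \int_{0}^{t} e^{-\alpha s} F(s) \, ds.
\]
The first term is bounded by $C t e^{-\alpha t} \|x\|$, whose maximum in $t$ is $\frac{C\|x\|}{e\alpha}$, and the second by $C \alpha \|x\| \int_0^{+\infty} s e^{-\alpha s} ds = \frac{C\|x\|}{\alpha}$, which proves (1).

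For the reverse implication $(1) \Rightarrow (2)$, I would introduce $H(s) := \int_{0}^{s} e^{-\alpha u} e^{iru} T_u x \, du$, which by uniform Kreiss boundedness satisfies $\|H(s)\| \leq \frac{C\|x\|}{\alpha}$ uniformly in $s$. Reversing the integration-by-parts trick via the factor $e^{\alpha s}$ gives
\[
\int_{0}^{t} e^{irs} T_s x \, ds = \int_{0}^{t} e^{\alpha s} H'(s) \, ds = e^{\alpha t} H(t) - \alpha \int_{0}^{t} e^{\alpha s} H(s) \, ds,
\]
whose norm is at most $\frac{C\|x\|}{\alpha} \bigl( e^{\alpha t} + (e^{\alpha t}-1) \bigr) \leq \frac{2 C \|x\| e^{\alpha t}}{\alpha}$. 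The free parameter $\alpha$ is then optimized by the choice $\alpha = 1/t$, which produces the bound $\frac{1}{t}\bigl\| \int_0^t e^{irs} T_s x\, ds \bigr\| \leq 2Ce\|x\|$, as required.

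There is no real obstacle here: the proof is a clean double integration by parts, and the only delicate point is the optimization $\alpha = 1/t$ in the harder direction $(1) \Rightarrow (2)$, together with the observation that in the easier direction the factor $t e^{-\alpha t}$ must be maximized in $t$ to obtain a bound uniform in $t$ with the correct $1/\alpha$ dependence.
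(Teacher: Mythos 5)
Your proposal is correct and follows essentially the same route as the paper: both directions are the same double integration by parts, with the same optimization $\alpha = 1/t$ in the direction from uniform Kreiss boundedness to the modulated Ces\`aro bound. The only cosmetic differences are that the paper first reduces to $r=0$ by replacing $T_t$ with $e^{irt}T_t$, and in the other direction it combines the two boundary terms exactly to get the constant $C/\alpha$ rather than your (equally acceptable) $C(1+1/e)/\alpha$.
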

	
	\begin{proof}
		Let $x \in X$.
		
		$(1) \Rightarrow (2)$: if $(T_t)_{t \geq 0}$ is uniformly Kreiss then for each $r \in \mathbb{R}$,  $(e^{irt}T_t)_{t \geq 0}$ is also uniformly Kreiss with same constant, so it suffices to show $(2)$ with $r = 0$.
		Let $t>0$ and $\alpha > 0$. By integration by parts
		\begin{align*}
			\int_{0}^t T_sx ds &= \int_{0}^t e^{\alpha s}e^{-\alpha s}T_sx ds \\
			&= e^{\alpha t}\int_{0}^t e^{-\alpha s}T_sx ds - \int_0^t \alpha e^{\alpha s} \left(\int_0^s e^{-\alpha u}T_uxdu \right)ds.
		\end{align*}
		Hence
		\[
		\frac{1}{t}\norm{\int_{0}^{t} T_s xds}  \leq \frac{C e^{\alpha t}\norm{x}}{\alpha t} + \frac{C (e^{\alpha t}-1)\norm{x}}{\alpha t}.
		\]
		Finally, choosing $\alpha = \frac{1}{t}$ yields
		\[
		\frac{1}{t}\norm{\int_{0}^{t} T_s xds}  \leq C(2e -1)\norm{x}.
		\]
		
		$(2) \Rightarrow (1)$: let $\alpha > 0$ and $\beta \in \mathbb{R}$. Write $R_t = e^{i \beta t }T_t $.  The same computations as above give
		\[
		\int_{0}^t e^{-\alpha s}e^{i\beta s}T_sx ds = e^{-\alpha t}\int_{0}^t R_sx ds + \alpha \int_{0}^t e^{-\alpha s} \left( \int_0^s R_ux du \right) ds.
		\]
		Therefore, if  $(T_t)_{t \geq 0}$ satisifies $(2)$, we get
		\begin{align*}
			\norm{\int_{0}^t e^{-\alpha s}e^{i\beta s}T_sx ds}
			& \leq Cte^{-\alpha t} \|x\| + C \alpha  \|x\| \int_0^t e^{-\alpha s} s ds \\
			& = Cte^{-\alpha t} \|x\| + C \alpha  \|x\| \left(-\dfrac{te^{-\alpha t}}{\alpha} + \dfrac{1-e^{-\alpha t}}{\alpha^2} \right) \\
			& = \dfrac{C\|x\|}{\alpha} (1-e^{- \alpha t}) \\
			& \leq \dfrac{C\|x\|}{\alpha}.
		\end{align*}
		This concludes the proof.
		%Therefore, if  $(T_t)_{t \geq 0}$ satisifies $(2)$, we get
		%\[
		%\norm{\int_{0}^t e^{-\alpha s}e^{i\beta s}T_sx ds} \leq Cte^{-\alpha t} + Ct(e^{-\alpha t}-1)
		%\]
		%and taking $t= -\frac{1}{\alpha}$ we obtain $(1)$.
	\end{proof}
	
	\begin{prop}\label{CNSCSB}
		Let $(T_t)_{t \geq 0}$ be a $C_0$-semigroup on a Banach space $X$. Then, $(T_t)_{t \geq 0}$ is \SCB if and only if there exists $C>0$ such that for each measurable function $\epsilon : \mathbb{R}_+ \rightarrow \mathbb{T}$, $(T_t)_{t\geq 0}$ satisfies
		\begin{equation}\label{CNSCSBfor}
			\norm{\frac{1}{t}\int_{0}^t \epsilon(s)T_sxds} \leq C\norm{x}, \quad \forall x \in X.
		\end{equation}
	\end{prop}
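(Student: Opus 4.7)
The plan is to prove both implications via a standard duality/linearization argument, similar to how one passes between $L^1$ norms of complex-valued functions and their supremum over unimodular multipliers.

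For the direct implication, assume $(T_t)_{t \ge 0}$ is strongly Ces\`aro bounded. Given $t > 0$, $x \in X$, and a measurable function $\epsilon : \mathbb{R}_+ \to \mathbb{T}$, I would use the Hahn--Banach theorem to pick a norming functional $x^* \in X^*$ with $\norm{x^*} \le 1$ such that
\[
\norm{\frac{1}{t}\int_0^t \epsilon(s) T_s x\, ds} = \left\langle \frac{1}{t}\int_0^t \epsilon(s) T_s x\, ds,\, x^*\right\rangle = \frac{1}{t}\int_0^t \epsilon(s) \langle T_s x, x^*\rangle\, ds.
\]
Taking the modulus under the integral and using $|\epsilon(s)| = 1$, the right-hand side is bounded by $\frac{1}{t}\int_0^t |\langle T_s x, x^*\rangle|\, ds \le C\norm{x}$ by strong Ces\`aro boundedness.

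For the converse implication, assume the estimate \eqref{CNSCSBfor}. Fix $t > 0$, $x \in X$, $x^* \in X^*$, and consider the scalar function $s \mapsto \varphi(s) := \langle T_s x, x^*\rangle$, which is continuous on $[0,t]$. Define
\[
\epsilon(s) = \begin{cases} \overline{\varphi(s)}/|\varphi(s)| & \text{if } \varphi(s) \ne 0, \\ 1 & \text{otherwise,} \end{cases}
\]
which is measurable with $|\epsilon(s)| = 1$, so $\epsilon(s) \varphi(s) = |\varphi(s)|$ for all $s$. Then
\[
\frac{1}{t}\int_0^t |\langle T_s x, x^*\rangle|\, ds = \left\langle \frac{1}{t}\int_0^t \epsilon(s) T_s x\, ds,\, x^*\right\rangle \le \norm{\frac{1}{t}\int_0^t \epsilon(s) T_s x\, ds} \norm{x^*} \le C \norm{x}\norm{x^*},
\]
which is the strong Ces\`aro boundedness condition.

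The only subtlety is verifying the measurability of $\epsilon$, but since $\varphi$ is continuous (so Borel measurable) and $z \mapsto \overline{z}/|z|$ is Borel on $\mathbb{C}\setminus\{0\}$, this is immediate. No substantial obstacle is expected; the argument is essentially the scalar identity $\int |f| = \sup_{|\epsilon|=1} \int \epsilon f$ transported into the Banach space setting through Hahn--Banach.
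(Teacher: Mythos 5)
Your proof is correct and follows essentially the same route as the paper: the forward direction bounds the norm by dualizing (the paper takes a supremum over the unit ball of $X^*$ where you invoke Hahn--Banach, which is the same thing), and the converse uses exactly the same unimodular multiplier $\epsilon$ built from the phase of $\langle T_s x, x^*\rangle$. If anything, your formula $\epsilon(s)=\overline{\varphi(s)}/|\varphi(s)|$ is the cleaner normalization, since it makes $\epsilon(s)\varphi(s)=|\varphi(s)|$ immediate.
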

	
	\begin{proof}
		Assume $(T_t)_{t\geq 0}$ is strongly Ces\`aro bounded. we have
		\begin{align*}
			\norm{\frac{1}{t}\int_{0}^t \epsilon(s)T_sxds} &= \sup_{\norm{x^*}=1} \Big|\Big\langle \Big(\frac{1}{t}\int_{0}^t \epsilon(s)T_sds\Big) x, x^* \Big\rangle\Big| \\
			&= \sup_{\norm{x^*}=1}\Big| \frac{1}{t}\int_{0}^t \langle \epsilon(s)T_s x, x^* \rangle ds\Big|\\
			&\leq \sup_{\norm{x^*}=1}\frac{1}{t} \int_{0}^t \big|\langle T_s x, x^* \rangle\big| ds\\
			& \leq C\norm{x}.
		\end{align*}
		For the converse, we fix $x\in X$ and $x^*\in X^*$ and define $\epsilon : \mathbb{R}_+ \rightarrow \mathbb{T}$ by $\epsilon(s) =   \displaystyle\frac{\overline{\langle T_sx,x^* \rangle}}{\langle T_sx,x^* \rangle} $ when $\langle T_sx,x^* \rangle \neq 0$ and $\epsilon(s) =1$ when  $\langle T_sx,x^* \rangle = 0$.  The function $\epsilon$ is measurable and we have
		\begin{align*}
			\frac{1}{t} \int_{0}^t \big|\langle T_s x, x^* \rangle\big|ds
			& = \frac{1}{t} \int_{0}^t \langle \epsilon(s) T_s x, x^* \rangle ds &\\
			& = \Big\langle \frac{1}{t}\int_{0}^t \epsilon(s)T_sx ds, x^* \Big\rangle \\
			&\leq \norm{\frac{1}{t}\int_{0}^t \epsilon(s)T_sxds} \norm{x^*} \\
			& \leq C\norm{x}\norm{x^*}.
		\end{align*}
		This completes the proof.
	\end{proof}	
	
	\begin{remark} \label{rkSCBimplUKB}
		\begin{enumerate}
			\item Using Proposition \ref{CNSUKB} and \ref{CNSCSB}, it is easy to see that the strong Ces\`aro boundedness property implies the \UKB property $($it suffices to take $\epsilon(s) = e^{irs}$ in \eqref{CNSCSBfor}$)$.
			\item It is clear that if $(T_t)_{t\ge 0}$ is \SCB then it is Ces\`aro bounded.
		\end{enumerate}
	\end{remark}

	The following proposition shows that the four notions defined in Definition \ref{defNotions} are all equivalent for a uniformly eventually positive $C_0$-semigroup.
	
	\begin{prop}\label{propequiv}
		Let  $(T_t)_{t\ge 0}$ be an individually eventually positive $C_0$-semigroup on a Banach lattice $X$. Then the following assertions are all equivalent.
		\begin{enumerate}[label = (\roman*)]
			\item $(T_t)_{t\ge 0}$ is Ces\`aro bounded,
			\item $(T_t)_{t\ge 0}$ is strongly Ces\`aro bounded,
			\item $(T_t)_{t\ge 0}$ is uniformly Kreiss bounded,
			\item $(T_t)_{t\ge 0}$ is Kreiss bounded.
			\item[(v)] $(T_t)_{t\ge 0}$ is Abel bounded.
		\end{enumerate}	
	\end{prop}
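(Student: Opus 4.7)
The chain of implications already assembled in this section yields
\[
(\text{ii}) \Rightarrow (\text{iii}) \Rightarrow (\text{iv}) \Rightarrow (\text{v}), \qquad (\text{ii}) \Rightarrow (\text{i}),
\]
where $(\text{iv}) \Rightarrow (\text{v})$ is immediate by restricting the resolvent bound to $\lambda \in \mathbb{R}_+ \subset \mathbb{C}_+$. The plan is therefore to close the cycle by proving $(\text{v}) \Rightarrow (\text{i}) \Rightarrow (\text{ii})$ under individual eventual positivity, with both steps relying on the lattice structure of $X$.

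For $(\text{v}) \Rightarrow (\text{i})$, I would first treat the (genuinely) positive case: for $x \in X_+$, $t > 0$ and $\mu > 0$, positivity of $T_s x$ together with $e^{\mu s} \le e^{\mu t}$ on $[0,t]$ gives, in the lattice order,
\[
0 \le \int_0^t T_s x\,ds \le e^{\mu t}\int_0^t e^{-\mu s} T_s x\,ds \le e^{\mu t}\int_0^\infty e^{-\mu s} T_s x\,ds = -e^{\mu t}R(-\mu,A)x.
\]
Monotonicity of the lattice norm on $X_+$ and the Abel hypothesis then yield
\[
\Bigl\|\int_0^t T_s x\,ds\Bigr\| \le e^{\mu t}\,\|R(-\mu,A)\|\,\|x\| \le \frac{e^{\mu t}C\|x\|}{\mu},
\]
and optimising at $\mu = 1/t$ gives $\|\int_0^t T_s x\,ds\| \le eCt\|x\|$. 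Decomposing an arbitrary $x \in X$ as $\mathrm{Re}(x)_+ - \mathrm{Re}(x)_- + i(\mathrm{Im}(x)_+ - \mathrm{Im}(x)_-)$, four positive parts each of norm $\le \|x\|$, extends this to Ces\`aro boundedness.

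For $(\text{i}) \Rightarrow (\text{ii})$, I would start with $x \in X_+$ and $x^* \in (X^*)_+$: positivity gives $\langle T_s x, x^*\rangle \ge 0$, so the absolute value is redundant and
\[
\frac{1}{t}\int_0^t \bigl|\langle T_s x, x^*\rangle\bigr|\,ds = \Bigl\langle \frac{1}{t}\int_0^t T_s x\,ds,\, x^*\Bigr\rangle \le \Bigl\|\frac{1}{t}\int_0^t T_s x\,ds\Bigr\|\,\|x^*\| \le C\|x\|\|x^*\|
\]
by Ces\`aro boundedness. Decomposing both $x$ and $x^*$ into four positive parts and summing the sixteen resulting terms extends the inequality to arbitrary $x \in X$ and $x^* \in X^*$.

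To pass from pure positivity to individual eventual positivity, for each positive $x$ one splits the integrals at $s = t_x$: on $[t_x, t]$ the arguments above run verbatim, while on $[0, t_x]$ the contribution is controlled by $\sup_{s \in [0,t_x]}\|T_s\| < \infty$, which is finite by the $C_0$ property. The main obstacle I anticipate is \emph{uniformity in $x$}, since the bounds produced in this way depend a priori on $x$ through $t_x$. Overcoming this will likely require either handling small $t$ through $C_0$ continuity and large $t$ through the dominant positive contribution, or a uniform-boundedness style argument on the positive cone that promotes the pointwise estimates into a genuine operator-norm bound. This reduction from individual to uniform control is where I expect the bulk of the technical work to lie.
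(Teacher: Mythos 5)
Your overall plan is coherent, and your $(i)\Rightarrow(ii)$ step is essentially the paper's own argument in a slightly different guise: the paper removes the absolute value via the lattice modulus $\bigl|\int_{t_x}^t \epsilon(s)T_sx\,ds\bigr| \le \int_{t_x}^t T_sx\,ds$ and its Proposition on unimodular multipliers, whereas you decompose $x$ and $x^*$ into positive parts; both then need the uniform boundedness principle to remove the $x$-dependence of the constant, exactly as you anticipate. The closing implications, however, are where the paper and your proposal diverge, and where your argument has a genuine gap.

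The gap is in $(v)\Rightarrow(i)$, at the identity $\int_0^\infty e^{-\mu s}T_sx\,ds = -R(-\mu,A)x$ for \emph{all} $\mu>0$. For a general $C_0$-semigroup this Laplace representation of the resolvent is only guaranteed for $\mu>\omega_0(T)$, and nothing in the hypotheses rules out $\omega_0(T)>0$ a priori: the introduction even recalls a positive Kreiss bounded semigroup with exponential growth from \cite{EisnerZwart2}, and bounding the growth is the entire point of Sections 3 and 4. So for small $\mu>0$ the integral $\int_0^\infty e^{-\mu s}T_sx\,ds$ need not converge absolutely, and even granting improper convergence you must still identify the limit with $-R(-\mu,A)x$. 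What rescues the argument for (individually eventually) positive semigroups is precisely the nontrivial material the paper imports from \cite{Ardentbook}: first, that the spectral bound of the generator of such a semigroup belongs to its spectrum, so Abel boundedness ($\mathbb{R}_+\subseteq\rho(-A)$) forces $s(-A)\le 0$; second, that the resolvent is then given by the improper Laplace transform for all $\mathrm{Re}\,\lambda>s(-A)$, not merely for $\mathrm{Re}\,\lambda>\omega_0(T)$. Your chain of inequalities uses both facts silently. Once they are supplied, your domination argument with $\mu=1/t$ does go through (with the caveat that in the eventually positive case the piece $\int_0^{t_x}e^{-\mu s}T_sx\,ds$ is not positive, so it must be subtracted and estimated in norm rather than dropped by monotonicity, and the resulting $x$-dependent constants again require a uniform boundedness argument). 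The paper sidesteps all of this by closing the cycle as $(v)\Rightarrow(iv)$ and $(iv)\Rightarrow(i)$, both by direct citation to \cite{Ardentbook}; your route is more self-contained in spirit but only becomes a proof once you invoke, or prove, the same resolvent-representation theorem.
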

	
	\begin{proof} $(i) \Rightarrow (ii)$ For $x \in X_+$, we let $t_x \geq 0$ be such that for every $t\geq t_x$, $T_tx$ is positive. Let $M_x = \sup_{u\in [0,t_x]} \ \|T_u\|$. Let $\epsilon : \mathbb{R}_+ \rightarrow \mathbb{T}$ be measurable. Since, for $t\geq t_x$, $T_tx$ is positive and $(T_t)_{t\ge 0}$ is \CB we have, for $t\geq t_x$,
		\begin{align*}
			\norm{\frac{1}{t}\int_{0}^t \epsilon(s)T_sxds}
			& = \norm{\frac{1}{t}\int_{0}^{t_x} \epsilon(s)T_sxds + \frac{1}{t}\int_{t_x}^{t} \epsilon(s)T_sxds} \\
			& \leq M_x\|x\| + \norm{\frac{1}{t}\Big|\int_{t_x}^t \epsilon(s)T_sxds \Big|} \\
			& \le M_x\|x\| + \norm{\frac{1}{t}\int_{t_x}^t |T_sx|ds } \\
			&= M_x\|x\| + \norm{\frac{1}{t}\int_{t_x}^t T_sxds} \\
			&= M_x\|x\| + \norm{\frac{1}{t}\int_{0}^t T_sxds - \frac{1}{t}\int_{0}^{t_x} T_sxds} \\
			& \le (2M_x+C)\norm{x}.
		\end{align*}
		We deduce that a similar estimate holds for a general $x\in X$.
		It follows from the uniform boundedness principle and Proposition \ref{CNSCSB} that $(T_t)_{t\ge 0}$ is strongly Ces\`aro bounded. \\
		$(ii) \Rightarrow (iii)$ follows from Remark \ref{rkSCBimplUKB} (1).\\
		$(iii) \Rightarrow (iv)$ follows from Proposition \ref{propUKBimplKB}.\\
		$(iv) \Rightarrow (v)$ follows from the definitions.\\
		$(v) \Rightarrow (iv)$ First, note that \cite[Theorem 1.2]{Ardentbook} holds true in the case of an individually eventually positive $C_0$-semigroup. Next, apply \cite[Corollary 1.3]{Ardentbook} and \cite[Corollary 1.4]{Ardentbook} to obtain $\mathbb{C}_- \subseteq \rho(A)$ as well as the desired estimate for the resolvent of the generator of $(T_t)_{t\ge 0}$.\\
		$(iv) \Rightarrow (i)$ Let $x\in X_+$. By \cite[Definition 2.8]{Ardentbook}, we have $\displaystyle \sup_{t>0} \frac{1}{t}\norm{\int_{0}^t T_s x ds} < +\infty.$
        We may now apply the uniform boundedness principle and this yields $(i)$.
	\end{proof}

	\begin{remark}
		If the semigroup is not individually eventually positive positive, this result does not hold true. For instance, for $A = -\begin{pmatrix}
		i & 1 \\
		0 & i
		\end{pmatrix},$
		it is easy to check that $$e^{-tA} = \begin{pmatrix}
		e^{it} & te^{it} \\
		0 & e^{it}
		\end{pmatrix}$$
		so that $\|e^{-tA}\| \sim t$ and $\left\| \displaystyle \int_0^t e^{-sA} ds \right\| \leq Ct$. Hence, the semigroup $\left( e^{-tA} \right)_{t\geq 0}$ is Ces\`aro bounded but it is not Kreiss bounded, since Kreiss boundedness is equivalent to the boundedness of the semigroup in the finite dimensional case.
	\end{remark}

%\begin{remark}\label{weakerassump} In the next section, we will estimate the growth rate of Kreiss bounded and uniformly eventually positive bounded $C_0$-semigroups on $L^p$. As we shall see, the key point is the behavior of the norm of the resolvent when $\text{Re}(\lambda)$ is close to $0$. Hence, we could slightly weaken the assumptions on the semigroup we consider. Let $(T_t)_{t\ge 0}$ be a $C_0$-semigroup on a Banach space $X$ with generator $-A$ and assume that $\mathbb{C}_- \subseteq \rho(A)$. Consider the modified Kreiss property:
%\begin{equation}\label{Kreissmod}
%    \exists C\geq 1, \ \forall \lambda \in \mathbb{C}_+, \ % \norm{R(-\lambda,A)} \leq C((\text{Re}(\lambda))^{-1}+1).
%\end{equation}
%Define, similarly, the modified uniform Kreiss property:
%\begin{equation}\label{UKreissmod}
 %   \exists C\geq 1, \ \forall \alpha > 0, \ \forall \beta \in \mathbb{R}, \ %\forall t>0, \
%    \norm{\int_{0}^{t} e^{i \beta s }e^{-\alpha s}T_{s}ds } \leq %C(\alpha^{-1}+1).
%\end{equation}
%Then, the proof of Proposition \ref{propUKBimplKB} shows that $$(\ref{UKreissmod}) \Rightarrow (\ref{Kreissmod})$$ and, if $X$ is an ordered Banach lattice and $(T_t)_{t\ge 0}$ is a uniformly eventually positive semigroup, the proof of Proposition \ref{propequiv} shows that $$(\ref{Kreissmod}) \Rightarrow (T_t)_{t\geq 0} \ \text{is Ces\`aro bounded}.$$
%Hence, the Kreiss and modified Kreiss properties are equivalent in that case.
%\end{remark}
	
	\section{Growth rate of uniformly eventually positive Kreiss bounded $C_0$-semigroups on $L^p$-spaces}\label{Section3}

Let $(\Omega, \mathcal{F}, \mu)$ be a $\sigma$-finite measure space and let $L^p$ be the Lebesgue space $L^p(\Omega, \mu)$. In this section, we study the growth rate of $\|T_t\|$ when $(T_t)_{t\ge 0}$ is a uniformly eventually positive $C_0$-semigroup on $L^p$. We recall that when $1<p<+\infty$, $L^p$ is reflexive so it follows from \cite[Page 9, 1.13]{Engel} that $(T_t^*)_{t\ge 0}$ is a $C_0$-semigroup on $L^{p'}$. Moreover, this semigroup is uniformly eventually positive. The main result of this section is the following.
	
	\begin{thm}\label{maintheorem}
Let $1 < p<\infty$ and let $(T_t)_{t\ge 0}$ be a Kreiss bounded and uniformly eventually positive $C_0$-semigroup on $L^p(\Omega)$.
Then, as $t\to +\infty$,
			$$\norm{T_t}= \mathcal{O}\left( \dfrac{t}{\log(t)^{\max\{1/p,1/p'\}}} \right).$$
	\end{thm}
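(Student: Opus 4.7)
My plan is to combine an auxiliary estimate (the upcoming Proposition \ref{propAlmpACB}) with a duality argument that exploits the reflexivity of $L^p$ for $1 < p < +\infty$.

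First, I would apply Proposition \ref{propAlmpACB} directly to $(T_t)_{t \geq 0}$ on $L^p$. Since the semigroup is uniformly eventually positive and Kreiss bounded, Proposition \ref{propequiv} guarantees that all the Ces\`aro and Kreiss boundedness conditions of Definition \ref{defNotions} hold simultaneously, and I expect the auxiliary proposition to convert this information into a growth estimate of the form $\|T_t\| = \mathcal{O}(t/\log(t)^{1/p})$, where the exponent $1/p$ reflects the $L^p$-geometry used in its derivation.

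Second, I would run the same argument on the adjoint semigroup. Since $L^p$ is reflexive, $(T_t^*)_{t \geq 0}$ is a $C_0$-semigroup on $L^{p'}$; uniform eventual positivity transfers to the adjoint because the adjoint of a positive operator is positive, and Kreiss boundedness transfers via the identity $R(-\lambda, A^*) = R(-\overline{\lambda}, A)^*$ together with the same resolvent bound. Applying Proposition \ref{propAlmpACB} on $L^{p'}$ to $(T_t^*)_{t \geq 0}$ then yields $\|T_t^*\| = \mathcal{O}(t/\log(t)^{1/p'})$. Since $\|T_t\| = \|T_t^*\|$, I obtain two bounds for the same quantity, and the better of the two is precisely $t/\log(t)^{\max\{1/p, 1/p'\}}$, which is the announced estimate.

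The technical heart of the argument is therefore the auxiliary Proposition \ref{propAlmpACB}. The hard part will be to extract a logarithmic gain from the Kreiss bound by genuinely using positivity, since the estimates derivable from Kreiss boundedness alone only give $\|T_t\| = \mathcal{O}(t)$. I expect the proof to combine the strong Ces\`aro boundedness guaranteed by Proposition \ref{propequiv} with a pointwise absolute-value manipulation that is only available thanks to eventual positivity (so that $|T_s x| = T_s x$ for $x \geq 0$ and $s$ large), followed by an interpolation-type argument between this averaged control and the trivial $\mathcal{O}(t)$ bound to squeeze out the $\log$ factor. Once that proposition is in hand, the deduction of Theorem \ref{maintheorem} itself is the short duality step outlined above.
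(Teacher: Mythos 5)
Your high-level architecture (apply the auxiliary proposition to both $(T_t)_{t\ge 0}$ on $L^p$ and to $(T_t^*)_{t\ge 0}$ on $L^{p'}$, then combine) matches the paper, and your remarks on how positivity and Kreiss boundedness transfer to the adjoint are correct. But there is a genuine gap in the middle: you assert that a single application of Proposition \ref{propAlmpACB} to $(T_t)_{t\ge 0}$ already ``converts this information into a growth estimate of the form $\|T_t\| = \mathcal{O}(t/\log(t)^{1/p})$.'' That is not what the proposition gives, and no such one-sided deduction is available. Proposition \ref{propAlmpACB} is an \emph{integrated orbit estimate}, $\int_0^t \|T_s x\|_{L^p}^p\, ds \le C t^p \|x\|_{L^p}^p$, with no logarithm in it; its proof (adapted from Vogt) uses positivity and Ces\`aro boundedness but does not touch $\|T_t\|$ itself. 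From that single estimate alone, the best you can extract for the operator norm is $\|T_t\| = \mathcal{O}(t)$, since the average of $\|T_sx\|^p$ over $[0,t]$ being $\mathcal{O}(t^{p-1})$ controls $T_t$ only through the crude bound $\|T_tx\| \le \|T_{t-s}\|\,\|T_sx\|$. Consequently the ``take the better of the two bounds'' step at the end of your argument is comparing two estimates neither of which you actually have.

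The missing idea is the mechanism that \emph{couples} the two integrated estimates: for $2 \le \alpha < \beta \le t$ one writes
\[
(\beta-\alpha)\,\langle T_t x, x^*\rangle = \int_\alpha^\beta \langle T_{t-s}x,\, T_s^* x^*\rangle\, ds,
\]
applies H\"older with exponents $p$ and $p'$ to split the right-hand side into $\left(\int_{t-\beta}^{t-\alpha}\|T_ux\|^p du\right)^{1/p}$ times $\left(\int_\alpha^\beta \|T_s^*x^*\|^{p'} ds\right)^{1/p'}$, and then sums the resulting inequality over the dyadic scales $\alpha = 2^l$, $\beta = 2^{l+1}$ for $0 \le l \le \lfloor \log t/\log 2\rfloor - 1$. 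The disjointness of the intervals $[t-2^{l+1}, t-2^l]$ lets the left-hand side accumulate a factor $L \asymp \log t$ while the right-hand side stays bounded by $\int_0^t \|T_ux\|^p du \le Ct^p\|x\|^p$; this is where the $\log(t)^{1/p}$ (resp.\ $\log(t)^{1/p'}$, swapping the roles of $T$ and $T^*$) comes from, and it genuinely requires \emph{both} integrated estimates simultaneously. Your guess about the internal structure of Proposition \ref{propAlmpACB} (an interpolation squeezing a log out of strong Ces\`aro boundedness) is likewise off target: the logarithm does not appear there at all, but only in this dyadic duality lemma.
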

	
	\noindent First, we need the following.
	
	\begin{prop}\label{propAlmpACB}
		Let \Tt \,be a uniformly eventually positive \CB $C_0$-semigroup on $L^p(\Omega)$. Then there exists $C>0$ such that for every $x \in L^p(\Omega)$ and every $t\geq 1$,
		\begin{equation}\label{propEgalAlmpACB}
			\int_{0}^{t}\norm{T_sx}_{L^p}^p ds \leq Ct^p \norm{x}_{L^p}^p.
		\end{equation}
	\end{prop}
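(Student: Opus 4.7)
The plan is to reduce to positive $x$ and then combine the strong Ces\`aro boundedness (equivalent to Ces\`aro boundedness for this class by Proposition \ref{propequiv}) with the uniform eventual positivity of $(T_s)_{s\geq 0}$, via a pointwise maximal-function argument.

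First, I would decompose $x = (\mathrm{Re}\,x)_+ - (\mathrm{Re}\,x)_- + i\,((\mathrm{Im}\,x)_+ - (\mathrm{Im}\,x)_-)$, so by the triangle inequality it suffices to prove the estimate for $x \in L^p_+(\Omega)$. Fix $t_0 \geq 0$ such that $T_s$ is positive for all $s \geq t_0$; then $T_s x \geq 0$ on $[t_0,t]$, and the contribution from $[0,t_0]$ is bounded by $t_0 \sup_{s \leq t_0}\|T_s\|^p \cdot \|x\|_p^p$, which is negligible compared with $t^p\|x\|_p^p$ for $t \geq 1$. The problem is therefore reduced to bounding $I := \int_{t_0}^t \|T_s x\|_p^p\,ds$.

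Next, I would introduce the pointwise maximal function of the orbit $w(\omega) := \sup_{s \in [t_0,t]}(T_s x)(\omega)$. Since $T_s x \leq w$ pointwise on $\Omega$, we have $(T_s x)^p \leq (T_s x)\cdot w^{p-1}$ pointwise. Integrating in $s$ and applying Fubini,
$$
I \;\leq\; \int_\Omega V(\omega)\, w(\omega)^{p-1}\,d\mu(\omega) \;\leq\; \|V\|_p\, \|w\|_p^{p-1},
$$
where $V := \int_{t_0}^t T_s x\,ds \in L^p_+$ and the last step is H\"older. Ces\`aro boundedness directly yields $\|V\|_p \leq Ct\|x\|_p$.

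The remaining and main task is to establish a maximal estimate $\|w\|_p \leq C't\,\|x\|_p$, which combined with the above would give $I \leq C^pt^p\|x\|_p^p$ as desired. I would approach this via strong Ces\`aro boundedness applied to the positive test function $w^{p-1}/\|w\|_p^{p-1} \in L^{p'}_+$, which gives $\int_0^t \langle T_s x, w^{p-1}\rangle\,ds \leq Ct\,\|x\|_p\,\|w\|_p^{p-1}$, and then close the loop by a reverse inequality relating $\|w\|_p^p$ to $I$, exploiting the $L^p$-continuity of $s \mapsto T_s x$. Alternatively, one may perform a dyadic decomposition $[t_0,t] = \bigcup_k [2^k t_0, 2^{k+1}t_0]$, use the semigroup identity $T_s x = T_{s-2^k t_0} (T_{2^k t_0} x)$ on each piece together with the $\mathcal{O}(s)$ operator bound coming from \cite{RozVer}, and sum the contributions.

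The hardest step is clearly this maximal inequality for $w$: since $(T_s)$ is not contractive, the classical Dunford--Schwartz maximal ergodic theorem does not apply, and the positivity must be combined with the Kreiss (equivalently Ces\`aro) bound in a delicate way to extract only linear, rather than quadratic, growth in $t$.
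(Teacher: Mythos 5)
Your reduction to positive $x$, the splitting off of $[0,t_0]$, and the H\"older step
$\int_{t_0}^t\|T_sx\|_p^p\,ds\le\|V\|_p\,\|w\|_p^{p-1}$ with $V=\int_{t_0}^tT_sx\,ds$ are all correct, and Ces\`aro boundedness does give $\|V\|_p\le Ct\|x\|_p$. The genuine gap is the maximal inequality $\|w\|_p\le C't\|x\|_p$ for $w=\sup_{s\in[t_0,t]}T_sx$, which you defer and do not prove. This is essentially an Akcoglu-type maximal ergodic inequality; such results are known for positive \emph{contractions}, and there is no available version for merely Ces\`aro bounded positive semigroups, so this step cannot be waved through. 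Neither of your two suggested routes closes it: pairing $T_sx$ against $w^{p-1}/\|w\|_p^{p-1}$ only reproduces the bound $\langle V,w^{p-1}\rangle\le Ct\|x\|_p\|w\|_p^{p-1}$, and to convert that into a bound on $\|w\|_p$ you would need $\|w\|_p^p\lesssim\langle V,w^{p-1}\rangle$, i.e.\ a pointwise domination of the supremum of the orbit by its time average, which is false in general; the only ``reverse inequality'' available, $\|w\|_p^p\ge I/t$, points the wrong way and makes the argument circular. The dyadic alternative fares no better: using $\|T_s\|=\mathcal{O}(s)$ from \cite{RozVer} on each block gives at best $\int_0^t\|T_sx\|_p^p\,ds=\mathcal{O}(t^{p+1})\|x\|_p^p$, one full power of $t$ worse than claimed.

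The paper's proof (adapting \cite[Theorem 2]{Vogt}) avoids the maximal function entirely, and this is precisely the idea you are missing: instead of the $L^\infty$-in-$s$ envelope $w$, one works with the $L^p$-in-$s$ quantity $\bigl(\int_{2t_0+1}^t(T_sx)^p\,ds\bigr)^{1/p}$, realized by \cite[Lemma 1]{Vogt} as a supremum over nonnegative $h$ in the unit ball of $L^{p'}(ds)$ of $\int h(s)T_sx\,ds$. Writing $1=\int_{s-t_0-1}^{s-t_0}du$ and using Fubini, each such integral equals $\int_{t_0}^{t-t_0}T_u\bigl(\int_{t_0}^{t_0+1}h(s+u)T_sx\,ds\bigr)du$; H\"older in $s$ bounds the inner bracket pointwise by $f:=\bigl(\int_{t_0}^{t_0+1}(T_sx)^p\,ds\bigr)^{1/p}$, \emph{uniformly in $h$ and $u$}, and positivity of $T_u$ then gives the pointwise bound $\int_{t_0}^{t-t_0}T_uf\,du$, which survives the supremum over $h$. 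Integrating over $\Omega$ and applying Ces\`aro boundedness to $f$ (which satisfies $\|f\|_p\le M\|x\|_p$) yields the $Ct^p\|x\|_p^p$ bound directly. In short: your skeleton is sound up to the maximal estimate, but that estimate is the whole difficulty, and the correct resolution is to replace the supremum by the $L^{p'}$-duality formulation so that positivity can be applied to a single dominating function $f$.
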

	\begin{proof}
		Let $t_0$ be such that $T_t$ is positive for every $t\geq t_0$.
		We make slight changes to the proof of \cite[Theorem 2]{Vogt} to be able to use the Ces\`aro boundedness property. Let $t>2t_0+1$ and $x\in L^p(\Omega)_+$. Let $h\in L^{p'}([2t_0+1,t]; \mathbb{R})$ be nonnegative and such that $\|h\|_{p'} \leq 1$ and extend $h$ on $\mathbb{R}_+$ by setting $0$ outside of $[2t_0+1,t]$. We have
		\begin{align*}
			\int_{2t_0+1}^t h(s)T_sf ds
			& = \int_{2t_0+1}^t \left(\int_{s-t_0-1}^{s-t_0} du\right) h(s)T_sx ds \\
			& = \int_{t_0}^{t-t_0} \left(\int_{t_0+u}^{t_0+u+1} h(s)T_sx ds \right) du \\
			& = \int_{t_0}^{t-t_0} T_u \left( \int_{t_0}^{t_0+1} h(s+u)T_sx ds \right) du.
		\end{align*}
		Define $f:=\displaystyle \left( \int_{t_0}^{t_0+1} (T_sx)^p ds \right)^{1/p} \in L^p(\Omega)$. Let $M=\sup_{u\in [t_0,t_0+1]} \ \|T_u\|$. By Fubini's theorem, we have
		$$
		\|f\|^p = \int_{t_0}^{t_0+1} \|T_sx\|^p ds \leq M^p \|x\|^p.
		$$
		Now, note that, by H\"older's inequality,
		$$
		\int_{t_0}^{t_0+1} h(s+u)T_sx ds \leq f
		$$
		so that, by positivity of $(T_r)_{r\geq t_0}$, we have
		$$
		\int_{2t_0+1}^t h(s)T_sx ds \leq \int_{t_0}^{t-t_0} T_uf du.
		$$
		Taking the supremum over such $h$ and applying \cite[Lemma 1]{Vogt} yields
		$$
		\left(\int_{2t_0+1}^t (T_sx)^p ds\right)^{1/p} \leq \int_{t_0}^{t-t_0} T_uf du.
		$$
We integrate the previous inequality over $\Omega$ and use the Ces\`aro boundedness property to obtain
$$
\int_{2t_0+1}^t \|T_sx\|^p ds \leq \left\| \int_{t_0}^{t-t_0} T_uf du \right\|^p \leq C'^p t^p \|f\|^p \leq C'^p M^p t^p \|x\|^p,
$$
which concludes the proof.
	\end{proof}

\begin{remark} The previous proposition is in fact much easier for the special case $p=1$. Indeed, if $(T_t)_{t\geq 0}$ is a positive and Ces\`aro bounded $C_0$-semigroup on $L^1(\Omega, \mu)$, then, for a positive $x\in L^1(\Omega, \mu)$,
$$
\int_{0}^{t} \norm{T_sx}ds = \int_{0}^{t} \left\langle T_sx, 1 \right\rangle ds = \left\langle \int_{0}^{t} T_sx ds, 1 \right\rangle = \norm{\int_{0}^{t} T_sx ds} \leq Ct \|x\|.
$$
The same applies to a uniformly eventually positive semigroup by splitting up the integral into two integrals.
\end{remark}
	
	Next, we prove that the estimate obtained in the previous proposition allows us to obtain the desired estimate for $\|T_t\|$.
	
	\begin{lemma}\label{normelog}
		Let $1 < p < +\infty$. Let $(T_t)_{t \geq 0}$ be a $C_0$-semigroup on a reflexive Banach space $X$ and assume that there is a function $f : (1,+\infty) \to \mathbb{R_+}$ and a constant $C\geq 0$ such that, for any $t\geq 2$ and any $x\in X$ and $x^* \in X^*$,
		$$
		\left(\int_{0}^t \norm{T_s x}^p ds \right)^{1/p} \leq f(t) \norm{x}
		\ \ \ 
		\text{and} \ \ \ 
		\left(\int_{0}^t \norm{T^*_s x^*}^{p'} ds\right)^{1/p'} \leq Ct \norm{x^*}.
		$$
		Then, as $t\to +\infty$, $\norm{T_t}= \mathcal{O}\left( \dfrac{f(t)}{\log(t)^{1/p}} \right)$. 
	\end{lemma}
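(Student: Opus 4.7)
The plan is to combine the two integral hypotheses through the semigroup identity $\langle T_t x, x^*\rangle = \langle T_s x, T^*_{t-s} x^*\rangle$, which holds for every $s \in [0, t]$. For fixed $x \in X$ and $x^* \in X^*$ with $\norm{x^*} = 1$, setting $\alpha := |\langle T_t x, x^*\rangle|$, the identity yields $\alpha \leq \norm{T_s x} \cdot \norm{T^*_{t-s} x^*}$, so $\alpha^p/\norm{T^*_{t-s} x^*}^p \leq \norm{T_s x}^p$. Integrating in $s$ over $[0, t]$, applying the first hypothesis, and substituting $u = t - s$ leads to
$$
\alpha^p \int_0^t \frac{du}{\norm{T^*_u x^*}^p} \leq f(t)^p \norm{x}^p.
$$
Taking the supremum over normalized $x, x^*$, the lemma will follow once we establish the lower bound $\int_0^t du/\norm{T^*_u x^*}^p \gtrsim \log t$.

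For this lower bound I would use a dyadic decomposition. For each integer $k \geq 0$ with $2^{k+1} \leq t$, Hölder's inequality with conjugate exponents $p'$ and $p$ applied to the factorization $1 = \norm{T^*_u x^*} \cdot \norm{T^*_u x^*}^{-1}$ gives
$$
2^k = \int_{2^k}^{2^{k+1}} du \leq \left(\int_{2^k}^{2^{k+1}} \norm{T^*_u x^*}^{p'} du\right)^{1/p'} \left(\int_{2^k}^{2^{k+1}} \norm{T^*_u x^*}^{-p} du\right)^{1/p}.
$$
The second hypothesis applied at $T = 2^{k+1} \geq 2$ bounds the first factor by $2C \cdot 2^k$, so dividing produces the constant contribution
$$
\int_{2^k}^{2^{k+1}} \frac{du}{\norm{T^*_u x^*}^p} \geq \frac{1}{(2C)^p}.
$$
Summing these contributions over the $\lfloor \log_2 t \rfloor$ available dyadic blocks yields $\int_0^t du/\norm{T^*_u x^*}^p \geq c \log t$, which, combined with the display above, gives $\alpha \leq C' f(t) \norm{x}/(\log t)^{1/p}$, and hence the desired bound on $\norm{T_t}$.

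The key step is the dyadic Hölder trick at the level of the adjoint, which transforms the scale-uniform integral control $\int_0^T \norm{T^*_u x^*}^{p'} du \leq (CT)^{p'}$ into a logarithmic lower bound for the reciprocal integral; this is what produces the gain over the routine estimate $\norm{T_t} = O(f(t))$ that one obtains from a single application of Hölder to the main inequality. Everything else is straightforward, modulo minor technicalities such as handling values of $u$ where $\norm{T^*_u x^*}$ vanishes (the integrand is then $+\infty$ and the inequalities hold trivially) and restricting to $t$ large enough for the dyadic sum to be nonempty.
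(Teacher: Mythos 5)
Your proof is correct. It rests on the same two ingredients as the paper's argument --- the duality identity $\langle T_t x, x^*\rangle = \langle T_s x, T^*_{t-s}x^*\rangle$ and a H\"older estimate with exponents $(p,p')$ over the dyadic blocks $[2^k,2^{k+1}]$ combined with the linear bound on the adjoint orbit --- but it is organized differently. The paper averages the duality identity over each block $[2^l,2^{l+1}]$, applies H\"older to the product $\norm{T_{t-s}x}\cdot\norm{T^*_s x^*}$ there to get $\norm{T_tx}^p \leq 2^pC^p\int_{t-2^{l+1}}^{t-2^l}\norm{T_ux}^p\,du$, and sums the $L\approx \log_2 t$ resulting inequalities so that the left-hand side picks up a factor $L$ while the right-hand sides telescope into $\int_0^t\norm{T_ux}^p\,du\leq f(t)^p\norm{x}^p$. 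You instead integrate the pointwise inequality over all of $[0,t]$ at once and push the entire logarithmic gain into the standalone statement $\int_0^t \norm{T^*_ux^*}^{-p}\,du\gtrsim \log t$, which you prove by applying H\"older to the artificial factorization $1=\norm{T^*_ux^*}\cdot\norm{T^*_ux^*}^{-1}$ on each block. The two computations are essentially dual to one another; yours has the merit of isolating a clean, reusable intermediate lemma about the adjoint orbit (and makes explicit where the hypothesis $\left(\int_0^T\norm{T^*_ux^*}^{p'}du\right)^{1/p'}\leq CT$ enters), at the mild cost of having to handle reciprocals and the degenerate case $\norm{T^*_ux^*}=0$, which you correctly dispose of (if $T^*_ux^*=0$ for some $u\leq t$ then $T^*_tx^*=0$ and the pairing vanishes). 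Both arguments yield the same constant-level conclusion $\norm{T_t}\lesssim f(t)/\lfloor\log_2 t\rfloor^{1/p}$.
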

	
	\begin{proof}
		Let $t >2$ and $2\leq \alpha <  \beta \leq t$. For any $x\in X$ and $x^* \in X^*$, we have
		\begin{align*}
			(\beta-\alpha)|\langle T_tx,x^* \rangle | &=   \left|\int_{\alpha}^{\beta} \langle T_{t-s}x,T_s^*x^* \rangle ds \right|\\
			& \leq \left(\int_{\alpha}^{\beta} \norm{T_{t-s}x}^p ds\right)^{1/p} \left(\int_{\alpha}^{\beta} \norm{T_{s}^*x^*}^{p'}ds\right)^{1/p'} \\
			&  \leq  C \beta \|x^*\| \left(\int_{t-\beta}^{t-\alpha} \norm{T_u x}^p du\right)^{1/p}.
		\end{align*}
		After taking the supremum over $\{\norm{x^*} =1 \}$, we get
		\[
		\frac{(\beta-\alpha)^p}{\beta^p} \norm{T_tx}^p \leq C^p \int_{t-\beta}^{t-\alpha} \norm{T_u x}^p du.
		\]
		Now we set $L:= \left\lfloor \log(t)/log(2) \right\rfloor$. For $0 \leq l \leq L-1$, the latter inequality applied to $\beta=2^{l+1}$ and $\alpha  = 2^{l}$ reads
		\[
		\norm{T_tx}^p \leq 2^{p} C^p \int_{t-2^{l+1}}^{t-2^l} \norm{T_ux}^pdu.
		\]
		Summing this inequality over $l$ now gives
		\[
		L\norm{T_tx}^p = \sum_{l=0}^{L-1} \norm{T_tx}^p \leq \sum_{l=0}^{L-1} 2^{p} C^p \int_{t-2^{l+1}}^{t-2^l} \norm{T_ux}^p du \leq 2^{p} C^p \int_0^t \norm{T_ux}^p du.
		\]
		Hence, the assumption on $T_t$ yields, for $t>2$, 
		\[
		\norm{T_tx}^p \leq \frac{2^{p} C^p f(t)^p \|x\|^p}{L}
		\]
		so that
		\[
		\norm{T_t} \leq \frac{2 C f(t)}{L^{1/p}}  = \mathcal{O}\left( \dfrac{f(t)}{\log(t)^{1/p}} \right),
		\]
		which concludes the proof.
	\end{proof}
	
	\begin{remark}\label{rkestilogp}
		The same proof applies if we permute the role of $T_t$ and $T_t^*$. In particular, if for every $x\in X$ and $x^*\in X^*$,
		$$
		\int_{0}^t \norm{T_s x}^p ds \leq Ct^p \norm{x}^p \quad	\text{and} \quad	\int_{0}^t \norm{T^*_s x^*}^{p'} ds \leq Ct^{p'} \norm{x^*}^{p'},
		$$
		then $$\norm{T_t} = \mathcal{O}\left( \dfrac{t}{\log(t)^{\max\{1/p,1/p'\}}} \right).$$
	\end{remark}

	\noindent We can now prove Theorem \ref{maintheorem}.
	
\begin{proof}[Proof of Theorem \ref{maintheorem}]
Since \Tt \, is uniformly eventually positive and Kreiss bounded, \Tt \, is \CB and by Proposition \ref{propAlmpACB}, \Tt \, satisfies \eqref{propEgalAlmpACB}. Moreover $(T_t^*)_{t\ge0}$ is a uniformly eventually positive \KB $C_0$-semigroup on $L^{p'}(\Omega)$ so it is \CB and, again, by Proposition \ref{propAlmpACB}, $(T_t^*)_{t\ge0}$ satisfies \eqref{propEgalAlmpACB} with $p'$ instead of $p$. Finally, by Remark \ref{rkestilogp}, we get the desired result.
\end{proof}
	
\begin{remark}\label{alphaKreiss} One can define, more generally, the notion of $\alpha$-Kreiss bounded semigroups. Namely, we say that \Tt \, is $\alpha$-Kreiss bounded for some $\alpha \geq 0$ if, for every $\lambda \in \mathbb{C}_+$,
\[
		\norm{R(-\lambda,A)} \leq C\left(\frac{1}{\text{Re}(\lambda)^{\alpha}}+1\right).
\]
If $\alpha=1$, it is straightforward to check that this is equivalent to the definition of Kreiss boundedness given in Definition \ref{defNotions}.
Let \Tt \, be $\alpha$-Kreiss bounded and positive on $L^p$ for some $1\leq p<\infty$.

\begin{enumerate}
    \item If $0\leq \alpha < 1$ then, \cite[Page 90, Remark 1.22]{Eisnerbook} together with \cite[Theorem 1]{Weis98} show that $(T_t)_{t\ge 0}$ is in fact exponentially stable.
    \item If $\alpha>1$, then adapting the results of this paper (replacing $t$ by $t^{\alpha}$ in various places and examining the proof of Lemma \ref{normelog}) allow us to get	$$\norm{T_t}= \mathcal{O}\left( t^{\alpha} \right), \ t\geq 1,$$
    retrieving \cite[Theorem 1.1 (3)]{RozVer}. We leave the details to the reader.
\end{enumerate}
\end{remark}

	Let us now discuss the discrete analogue of Theorem \ref{maintheorem}. Let $T$ be a bounded operator on a Banach space $X$. We say that $T$ is Ces\`aro bounded if
	$$
	\sup_{n\geq 1} \dfrac{1}{n} \left\| \sum_{k=0}^{n-1} T^k \right\| < +\infty.
	$$
	Similarly, one can define, for $T$ (or for the discrete semigroup $(T^n)_{n\in \mathbb{N}}$), the notion of (absolute, uniform) Kreiss boundedness. We refer to \cite{Cuny2} for the relevant definitions. Following Section \ref{Section2} of this paper, one can prove that for an individually eventually positive operator $T$ on $L^p$, these notions are equivalent. Moreover, it is straightforward to adapt the results of Section \ref{Section3} so that we can get the following result.
	
\begin{thm}\label{maintheoremdiscrete}
	Let $1< p<\infty$ and let $T$ be a uniformly eventually positive Ces\`aro bounded 
	on $L^p(\Omega)$. Then,
		$$\norm{T^n}= \mathcal{O}\left( \dfrac{n}{\log(n)^{\max\{1/p,1/p'\}}} \right).$$
\end{thm}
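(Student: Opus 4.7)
The plan is to mirror the continuous proof of Theorem \ref{maintheorem}. The two key ingredients to transport are (i) a discrete analogue of Proposition \ref{propAlmpACB}, stating that a uniformly eventually positive \CB operator $T$ on $L^p$ satisfies $\sum_{k=0}^{n-1}\|T^k x\|^p \leq C n^p \|x\|^p$ for every $x\in L^p$ and $n\geq 1$, and (ii) a discrete analogue of Lemma \ref{normelog}, combining such estimates for $T$ and for its adjoint into a logarithmic gain on $\|T^n\|$. Since $T^*$ is again a uniformly eventually positive \CB operator on $L^{p'}$, applying (i) to $T$ and to $T^*$ and feeding them into (ii) produces the bound with exponent $\max\{1/p,1/p'\}$.

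For (i), I would follow the proof of Proposition \ref{propAlmpACB} with integrals replaced by finite sums. Let $n_0\geq 1$ be such that $T^k\geq 0$ for every $k\geq n_0$, fix $n>2n_0$ and $x\in L^p_+$, and test against a nonnegative sequence $(h_k)_{2n_0\leq k<n}$ with $\sum_k h_k^{p'}\leq 1$. The continuous identity $1=\int_{s-n_0-1}^{s-n_0}du$ is replaced by the discrete identity $n_0=\sum_{j=k-2n_0+1}^{k-n_0}1$; multiplying through, swapping the order of summation and writing $T^k=T^jT^{k-j}$ yields, after applying H\"older's inequality in the inner sum and using the positivity of $T^j$ for $j\geq n_0$, a pointwise bound
\[
\sum_{k=2n_0}^{n-1} h_k T^k x\;\leq\;\frac{1}{n_0}\sum_{u=n_0}^{n-n_0-1} T^u f,
\]
where $f:=\Bigl(\sum_{i=n_0}^{2n_0-1}(T^i x)^p\Bigr)^{1/p}\in L^p$ satisfies $\|f\|\leq M\|x\|$ for a constant $M$ depending only on $n_0$ and the norms $\|T^{n_0}\|,\dots,\|T^{2n_0-1}\|$. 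Taking the supremum over $h$ recovers $\bigl(\sum_k\|T^k x\|^p\bigr)^{1/p}$ on the left via the discrete version of \cite[Lemma 1]{Vogt}, and applying the \CB property of $T$ on the right bounds $\bigl\|\sum_u T^u f\bigr\|$ by $C n\|f\|$. Combining gives $\sum_{k=2n_0}^{n-1}\|T^k x\|^p\leq C' n^p\|x\|^p$; the short initial block $k<2n_0$ is absorbed into the constant.

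For (ii), I use the identity $\langle T^n x,x^*\rangle=\langle T^{n-k}x,(T^*)^k x^*\rangle$ valid for $0\leq k\leq n$. With $L:=\lfloor\log_2 n\rfloor$ and for each $0\leq l\leq L-1$, summing over the dyadic block $2^l\leq k<2^{l+1}$ and applying H\"older's inequality gives
\[
2^l|\langle T^n x,x^*\rangle|\leq\Bigl(\sum_{k=2^l}^{2^{l+1}-1}\|T^{n-k}x\|^p\Bigr)^{1/p}\Bigl(\sum_{k=2^l}^{2^{l+1}-1}\|(T^*)^k x^*\|^{p'}\Bigr)^{1/p'}.
\]
Applying (i) to $T^*$ on $L^{p'}$ controls the second factor by $C\cdot 2^{l+1}\|x^*\|$. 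Raising to the $p$-th power, dividing by $2^{lp}$ and summing over $l$ turns the left-hand side into $L\,|\langle T^n x,x^*\rangle|^p$, while the right-hand side is bounded (via (i) applied to $T$) by a constant times $n^p\|x\|^p\|x^*\|^p$. Taking the supremum over $\|x^*\|=1$ yields $\|T^n\|=\mathcal{O}(n/\log(n)^{1/p})$, and swapping the roles of $T$ and $T^*$ gives the same bound with $1/p'$; combining the two produces $\|T^n\|=\mathcal{O}(n/\log(n)^{\max\{1/p,1/p'\}})$.

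The main obstacle is the bookkeeping around the eventual-positivity threshold $n_0$: the continuous change of variables $s\mapsto s+u$ shifts by a real parameter and the length-one averaging window is perfectly adapted to it, whereas in the discrete setting every shift is integer-valued and the analogous averaging window has length $n_0$, so the rearrangement of the double sum that produces the pointwise domination $\sum_k h_k T^k x\leq n_0^{-1}\sum_u T^u f$ must be set up more carefully and small boundary contributions (indices $j<n_0$ or $j>n-2n_0$) must be absorbed into the constant. Once these details are in place, the argument is structurally identical to the one in Section \ref{Section3}.
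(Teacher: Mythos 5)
Your proposal is correct and is essentially the paper's own approach: the paper proves Theorem \ref{maintheoremdiscrete} merely by asserting that the arguments of Section \ref{Section3} (Proposition \ref{propAlmpACB} via Vogt's duality lemma, and the dyadic summation of Lemma \ref{normelog}) adapt straightforwardly to powers of a single operator, and your discretization of the averaging window, of the $\ell^p$--$\ell^{p'}$ duality step, and of the dyadic blocks is exactly that adaptation. The boundary issues around the positivity threshold $n_0$ that you flag (starting the sum at $k\geq 3n_0-1$ so that $T^j$ is positive, and absorbing the short initial block into the constant) are genuine but harmless, just as in the continuous proof.
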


	\begin{remark}
		This theorem improves \cite[Corollary 3.9]{Cuny2} where the growth rate that is obtained is $\norm{T^n}= \mathcal{O}\left( n/\sqrt{\log(n)} \right)$.
	\end{remark}

	\begin{q}
		\begin{enumerate}
		\item
In Theorem \ref{maintheorem}, we assumed that the semigroup is uniformly eventually positive and the same assumption was made in \cite{Vogt} to prove the equality between the growth bound and the spectral
bound of its generator. It is not known whether this equality holds if we assume that the semigroup is individually eventually positive. Hence, it is natural to ask if Theorem \ref{maintheorem} remains true if we replace "uniformly eventually positive" by "individually eventually positive.

		\item In view of Section \ref{Section4}, we can also ask whether the estimate in Theorem \ref{maintheorem} can be improved. For example, if $(T_t)_{t\ge 0}$ is a uniformly eventually positive Kreiss bounded $C_0$-semigroup on $L^p$, $1< p<+\infty$, do we have $\|T_t\| = \mathcal{O}\left( t^{1-\epsilon} \right)$ for some $\epsilon\in (0,1)$ ?
			\end{enumerate}
	\end{q}

\section{Growth rate of individually eventually positive Kreiss bounded $C_0$-semigroups on $(\text{AL})$-spaces and $(\text{AM})$-spaces}	 \label{Section4}

In this section, we estimate the growth rate of Kreiss bounded $C_0$-semigroups on $(\text{AL})$ and $(\text{AM})$-spaces. We will also make the assumption that the semigroups are individually eventually positive on $(\text{AL})$-spaces, and uniformly eventually positive on $(\text{AM})$-spaces.

%If $(T_t)_{t\ge 0}$ is a $C_0$-semigroup on a Banach lattice, we say that $(T_t)_{t\ge 0}$ is individually eventually positive if
%$$
%\forall x\in X_+, \ \exists t_x \geq 0, \ %t\geq t_x \implies T_tx \in X_+.
%$$
%A uniformly eventually positive $C_0$-semigroup is of course individually eventually positive but converse is not always true, see e.g. \cite[Example 5.7]{Daners}.

Let $X$ be a (real of complex) Banach lattice. Recall that $X$ is an $(\text{AL})$-space if its norm satisfies the following additive property
\begin{equation}\label{additiveAL}
    	\forall x,y \in X_+, \ \norm{x+y} = \norm{x} + \norm{y}.
\end{equation}
	Typical examples of $(\text{AL})$-spaces are the $L^1$-spaces. In fact, it is known  that every $(\text{AL})$-space is isomorphic (as a Banach lattice) with a suitable $L^1(\mu)$, see \cite{Kakutani}.
	
Next, we say that $X$ is an $(\text{AM})$-space if its norm satisfies 
$$
\forall x,y \in X_+, \ \norm{\sup{(x,y)}} = \sup{(\norm{x},\norm{y})}.
$$
For example, if $K$ is compact, the space $\mathcal{C}(K)$, or any sublattice, is an $(\text{AM})$-space. As recalled in the introduction, the dual of an $(\text{AM})$-space is an $(\text{AL})$-space.

We first establish an estimate for the growth rate of certain $C_0$-semigroups on $(\text{AL})$-space, see Theorem \ref{maintheoremAL}. Then, by a duality argument, we obtain, in  Theorem \ref{maintheoremAM}, a similar estimate for the growth of certain semigroups on $(\text{AM})$-spaces.

%Furthermore, we will say that $X$ is an $(\text{AM})$-space with unit if there exists $u \in X_+$ such that $[-u,u] = \{  x\in X, \ -u \leq  x \leq u \}$ coincides with the unit ball of the real part of $X$. For example, if $K$ is a compact Hausdorff space, $\mathcal{C}(K)$ is an $(\text{AM})$-space with unit (in this case, the unit is simply the function constantly equal to $1$).

We will assume that all the Banach lattices are complex but it is easy to obtain similar results for real Banach lattices, see Remark \ref{realcase}.\\

		\begin{thm}\label{maintheoremAL}
		Let $(T_t)_{t\ge 0}$ be an individually eventually positive $C_0$-semigroup on 
		an $(\text{AL})$-space $X$. Assume that $(T_t)_{t\ge 0}$ is Kreiss bounded.
		Then, there exists $\epsilon \in (0,1)$ such that, as $t\to +\infty$,
			$$
			\norm{T_t} = \mathcal{O}(t^{1-\epsilon}).
			$$
	\end{thm}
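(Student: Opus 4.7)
The plan is to exploit the additivity of the (AL)-norm on the positive cone to convert the Ces\`aro bound into an $L^1$-in-time estimate for $s \mapsto \|T_s x\|$, and then to combine this with the elementary semigroup inequality $\|T_t x\| \le \|T_{t-s}\|\,\|T_s x\|$ in order to obtain a nonlinear integral inequality for $M(t) := \|T_t\|$ whose resolution delivers the polynomial improvement $\mathcal{O}(t^{1-\epsilon})$.

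First I would obtain the $L^1$-in-time bound. By Proposition \ref{propequiv}, the semigroup is Ces\`aro bounded, say with constant $C$. For $x \in X_+$, let $t_x\ge 0$ be such that $T_s x \in X_+$ for every $s\ge t_x$; Riemann sum approximation together with the additivity \eqref{additiveAL} gives $\bigl\|\int_{t_x}^t T_s x\,ds\bigr\| = \int_{t_x}^t \|T_s x\|\,ds$, which combined with Ces\`aro boundedness and a crude control of the contribution of $[0,t_x]$ yields $\int_0^t \|T_s x\|\,ds \le Ct\|x\| + K_x$ (with $K_x$ depending a priori on $x$). Decomposing any $x \in X$ into its four positive parts extends this pointwise $\mathcal{O}(t)$ bound to all of $X$. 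The seminorms $p_t(x) := \tfrac{1}{t}\int_0^t \|T_s x\|\,ds$ are continuous on $X$ and pointwise bounded in $t$, so the uniform boundedness principle produces a uniform constant $C_1$ with $\int_0^t \|T_s x\|\,ds \le C_1 t\|x\|$ for all $x \in X$ and $t \ge 1$.

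Next I would derive a master inequality for $M$. From $T_t x = T_{t-s}T_s x$ we get $\|T_s x\| \ge \|T_t x\|/M(t-s)$ for $0 \le s \le t$; integrating in $s$, substituting $u = t-s$, and taking the supremum over $\|x\|=1$ produces
\[
M(t)\,g(t) \;\le\; C_1 t, \qquad g(t) := \int_0^t \frac{du}{M(u)},
\]
where $g$ is absolutely continuous and strictly positive for $t > 0$ since $M$ is strictly positive and lower semicontinuous (the case $M(u_0)=0$ being trivial by the semigroup law). Rewriting this inequality as $(\log g)'(t) \ge 1/(C_1 t)$ and integrating from some $t_0 > 0$ gives $g(t) \ge A\,t^{1/C_1}$ for a positive constant $A$, and substituting back yields
\[
M(t) \;\le\; \frac{C_1 t}{g(t)} \;\le\; \frac{C_1}{A}\,t^{1-1/C_1},
\]
which is the claim with $\epsilon := 1/C_1 \in (0,1)$.

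The main technical obstacle is the uniformity of the constant in the first step under only individual (as opposed to uniform) eventual positivity, since $t_x$ is not a priori bounded in $x$; the Banach-Steinhaus argument applied to the sublinear functionals $p_t$ is what rescues the otherwise pointwise-in-$x$ estimate. Once this uniformity is in hand, the derivation of the master inequality and its resolution by an elementary ODE-type comparison match the paper's announced ``simple estimates''.
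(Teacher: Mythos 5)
Your proof is correct, and its overall strategy coincides with the paper's: use Proposition \ref{propequiv} together with the additivity \eqref{additiveAL} of the norm on the positive cone to upgrade Ces\`aro boundedness to the time-integrated bound $\int_0^t\|T_sx\|\,ds\le C\,t\|x\|$, then feed the reverse semigroup inequality $\|T_sx\|\ge\|T_tx\|/\|T_{t-s}\|$ into that bound to extract the polynomial gain. The two executions differ in where uniformity is obtained and how the final inequality is resolved. You apply Banach--Steinhaus to the seminorms $p_t$ up front to get a single constant $C_1$, whereas the paper's Lemma \ref{norme2} tolerates an $x$-dependent constant $K_x$ in its hypothesis and invokes the uniform boundedness principle only after the reverse inequality has been summed; both are legitimate. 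More substantively, you resolve the master inequality $M(t)\int_0^t M(u)^{-1}\,du\le C_1 t$ in continuous time via the differential inequality $(\log g)'\ge 1/(C_1t)$, while the paper discretizes (choosing $t_n\in[n,n+1]$ realizing the average of $\|T_sx\|$ on $[n,n+1]$) and quotes the proof of \cite[Lemma 3.2]{Cuny1} for the resulting recursion $\|T_{n+1}\|\sum_{i\le n+1}\|T_i\|^{-1}\le Cf(n)$. Your continuous version is self-contained and avoids the external citation, at the price of the measure-theoretic points---which you correctly flag---that $u\mapsto\|T_u\|$ is lower semicontinuous and locally bounded below (via submultiplicativity, $M(u)\ge M(t)/M(t-u)$ when $M(t)>0$), so that $g$ is locally Lipschitz and the logarithmic integration is licit. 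The only cosmetic caveat is the edge case $C_1\le 1$, where $\epsilon=1/C_1$ leaves $(0,1)$; but then $\|T_t\|$ is bounded and the conclusion holds trivially for any $\epsilon$.
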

	
Let us start with the following lemma. It is slightly more general than we really need but that can be of independent interest.

		\begin{lemma}\label{norme2}
		Let $(T_t)_{t \geq 0}$ be a $C_0$-semigroup on a Banach space $X$. Assume that there is an increasing function $f : [1,+\infty) \to (0,+\infty)$ such that,
		\begin{equation}\label{Lemmahyp}
		  \forall x\in X, \ \exists K_x>0, \ \forall t\geq 1, \    \int_{0}^t \norm{T_s x} ds \leq K_xf(t) \norm{x}.  
		\end{equation}
		Define $F$ on $(2,+\infty)$ by $F(u) = \displaystyle \int_2^u \dfrac{ds}{f(s)}$. Then, there is a constant $C\geq 1$ such that $$\norm{T_t}= \mathcal{O}\left(f(t)e^{-F(t)/C}\right).$$
	\end{lemma}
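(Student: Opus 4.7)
The strategy is to exploit the semigroup identity $T_t = T_{t-s}T_s$ to convert the hypothesis into a functional inequality for $\|T_t\|$ involving an auxiliary function $\Phi$, and then to extract the exponential factor $e^{-F(t)/C}$ by integrating a differential inequality for $\Phi$.

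The first step is to upgrade the pointwise hypothesis (with constant $K_x$ depending on $x$) to a uniform one. For each $t \geq 1$, the map $x \mapsto f(t)^{-1}\int_0^t \|T_s x\| ds$ is a continuous seminorm on $X$ (by strong continuity of $(T_t)_{t\ge 0}$ and dominated convergence), and the family indexed by $t\geq 1$ is pointwise bounded. The Banach--Steinhaus theorem for continuous seminorms then yields a constant $K>0$ such that
\[
\int_0^t \|T_s x\| ds \leq K f(t) \|x\|, \quad x\in X,\ t\geq 1.
\]
One may also assume $\|T_u\|>0$ for every $u\geq 0$; otherwise $T_{u_0}=0$ for some $u_0$ forces $T_t=0$ for all $t\geq u_0$ and the conclusion is trivial.

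Next, for $0\leq s\leq t$, the identity $T_t x = T_{t-s}T_s x$ gives $\|T_s x\| \geq \|T_t x\|/\|T_{t-s}\|$. Integrating over $s\in[0,t]$ with the change of variable $u=t-s$:
\[
Kf(t)\|x\| \;\geq\; \int_0^t \|T_s x\| ds \;\geq\; \|T_t x\| \int_0^t \frac{du}{\|T_u\|} \;=:\; \|T_t x\|\,\Phi(t).
\]
Here $\Phi$ is well-defined and positive on $(0,+\infty)$: the map $u\mapsto \|T_u\|$ is lower semi-continuous (as a supremum of the continuous functions $u\mapsto \|T_u x\|$) and strictly positive, hence bounded below by a positive constant on every compact interval by a standard compactness argument. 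Taking the supremum over $\|x\|=1$ yields the key inequality $\|T_t\|\,\Phi(t)\leq Kf(t)$.

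Finally, $\Phi$ is absolutely continuous with $\Phi'(t)=1/\|T_t\|$ a.e., so the key inequality rewrites as $(\log\Phi)'(t)\geq 1/(Kf(t))$ a.e. Integrating from $2$ to $t$ gives $\log(\Phi(t)/\Phi(2))\geq F(t)/K$, hence $\Phi(t)\geq \Phi(2)e^{F(t)/K}$. Substituting back into $\|T_t\|\leq Kf(t)/\Phi(t)$ produces
\[
\|T_t\| \leq \frac{K}{\Phi(2)}\,f(t)\,e^{-F(t)/K}, \quad t>2,
\]
which is the desired conclusion with $C:=\max(K,1)$. The only real technical point is ensuring the integrability of $1/\|T_u\|$ on compact intervals, handled by the lower semi-continuity argument together with the reduction of the vanishing case.
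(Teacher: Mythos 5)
Your proof is correct, and it reaches the same key inequality as the paper but by a more direct, continuous route. Both arguments rest on the same idea: from $\|T_t x\|\le \|T_{t-s}\|\,\|T_s x\|$ one accumulates the reciprocals of the norms against the hypothesis $\int_0^t\|T_sx\|\,ds\le K f(t)\|x\|$, obtaining an inequality of the form $\|T_t\|\cdot\big(\text{sum/integral of }1/\|T_u\|\big)\le K f(t)$, and then a Gronwall-type iteration extracts the factor $e^{-F(t)/C}$. The paper discretizes: it picks $t_n\in[n,n+1]$ with $\|T_{t_n}x\|=\int_n^{n+1}\|T_sx\|\,ds$, derives $\|T_{n+1}\|\sum_{i=1}^{n+1}\|T_i\|^{-1}\le Cf(n)$, and then invokes the proof of Lemma~3.2 of Cohen--Cuny--Eisner--Lin to finish, applying the uniform boundedness principle only at that late stage. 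You instead apply Banach--Steinhaus up front to make $K_x$ uniform, integrate the pointwise inequality directly to get $\|T_t\|\,\Phi(t)\le Kf(t)$ with $\Phi(t)=\int_0^t\|T_u\|^{-1}du$, and integrate the resulting differential inequality $(\log\Phi)'\ge 1/(Kf)$ yourself. Your version is self-contained (no external discrete lemma, no mean-value selection of the $t_n$), at the modest cost of the measure-theoretic housekeeping you correctly supply: the reduction to $\|T_u\|>0$ everywhere, the lower semicontinuity of $u\mapsto\|T_u\|$ (so that it attains a positive minimum on compacts, making $\Phi$ finite and locally Lipschitz), and the a.e.\ differentiation of $\Phi$. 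The final adjustment $C=\max(K,1)$ is also handled correctly since $F\ge 0$ on $(2,+\infty)$.
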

	
	\begin{proof}
		Fix a unit vector $x\in X$ and let $n\in \mathbb{N} \cup \lbrace 0 \rbrace$. The continuity of $s \mapsto \|T_sx\|$ ensures the existence of $t_n \in [n,n+1]$ such that
		\[
		\|T_{t_n} x \| = \int_n^{n+1} \|T_sx\| ds.
		\]
		Let $1\leq j \leq n$. We have
		\begin{align}\label{Thm1ineg}
		\|T_{t_n}x\| \leq \|T_{t_n-t_{n-j}}\|\|T_{t_{n-j}}x\|.
		\end{align}
		and since $j-1 \leq t_n-t_{n-j} \leq j+1$, we have, by letting $M = \sup_{u\in [0,2]} \ \|T_u\|$, $$\|T_{t_n-t_{n-j}}\| = \|T_{(t_n-t_{n-j}) - (j-1)} T_{j-1}\| \leq M \|T_{j-1}\|.$$
		Hence, by \eqref{Thm1ineg},
		$$\|T_{n+1}x\| \leq M\|T_{t_n}x\| \leq M \|T_{j-1}\| \|T_{t_{n-j}}x\|.$$
		From now on, assume that for every $k, \|T_k\| > 0$, otherwise the result is trivial. Summing the latter inequality from $j=1$ to $n$ gives
		$$
		\|T_{n+1}x\| \sum_{i=0}^{n-1} \dfrac{1}{\|T_i\|} \leq M \sum_{k=0}^{n-1} \|T_{t_k}x\| = M\int_0^{n} \|T_sx\| ds \leq MK_xf(n)\|x\|.
		$$
		The uniform boundedness principle and the fact that
		$$
		\|T_{n+1}\| \sum_{i=1}^{n+1} \dfrac{1}{\|T_{i}\|} \leq \|T_{n+1}\| \sum_{i=1}^{n-1} \dfrac{1}{\|T_{i}\|} + 2M,
		$$
	    give the existence of a constant $C$ such that
		$$
		\|T_{n+1}\| \sum_{i=1}^{n+1} \dfrac{1}{\|T_{i}\|} \leq Cf(n).
		$$
		By (the proof of) \cite[Lemma 3.2]{Cuny1}, we get that $$\|T_{n+1}\| = \mathcal{O}\left(f(n)e^{-F(n+1)/C}\right).$$
		Finally, if $t \geq 2$, we let $n\in \mathbb{N}$ be such that $t\in [n,n+1]$ so that we have
		\[
		\|T_t\| = \|T_{t-n}T_{n}\| \leq M \|T_n\|=\mathcal{O}\left(f(t)e^{-F(t)/C}\right).
		\]
		This concludes the proof of the Lemma.
	\end{proof}

	\begin{proof}[Proof of Theorem \ref{maintheoremAL}]	
	 An individually eventually positive Kreiss bounded $C_0$-semigroup $(T_t)_{t\geq 0}$ on a $(\text{AL})$-space $X$ satisfies the assumption (\ref{Lemmahyp}) of Lemma \ref{norme2} with $f(s)=s$. Indeed, let $x\in X_+$ and let $t_x \geq 0$ be such that, for every $t\geq t_x$, $T_tx \in X_+$. Property (\ref{additiveAL}) implies that
	 	$$
	\int_{t_x}^{t} \norm{T_sx}ds = \norm{\int_{t_x}^{t} T_sx ds}.
	$$
Let $M_x = \sup_{u\in [0,t_x]} \ \|T_u\|$. By Proposition \ref{propequiv}, $(T_t)_{t\ge 0}$ is Ces\`aro bounded. Hence, there is a constant $C\geq 0$ such that $\norm{\displaystyle \int_{0}^t T_s x ds} \leq Ct\|x\|$. In particular, for every $t\geq t_x$,
$$\norm{\int_{t_0}^t T_s x ds} = \norm{\int_0^t T_s x ds - \int_0^{t_0} T_s x ds} \leq (Ct+M_xt_0)\|x\| \leq (C+M_x)t \|x\|.$$
Hence, for $t\geq t_x$,
\begin{align*}
    \int_{0}^{t} \norm{T_sx}ds = \int_{0}^{t_x} \norm{T_sx}ds + \norm{\int_{t_x}^{t} T_sx ds} \leq (C+2M_x)t\|x\|.
\end{align*}
If $1\leq t \leq t_x$, we simply have
$$
\int_{0}^{t} \norm{T_sx}ds \leq M_x t \|x\|.
$$
This shows (\ref{Lemmahyp}) for $x\in X_+$ and since $X_+$ spans $X$, we obtain (\ref{Lemmahyp}) for every $x\in X$.

Now, we can apply Lemma \ref{norme2} and, for this choice of function $f$ and using the same notation as in the Lemma, we have $F(u)=\log(u)-\log(2)$ so that
	$$\norm{T_t}= \mathcal{O}\left(f(t)e^{-F(t)/C}\right) = \mathcal{O}\left(t^{1-1/C}\right),$$
	which finishes the proof of the Theorem.
	\end{proof}

\begin{remark}
	Conversely, for every $\epsilon\in (0,1)$, it is easy to construct a weighted translation semigroup $(T_t)_{t\ge 0}$ on $L^1(\mathbb{R}_+)$ (see \cite[Corollary 2.3]{Mull1} for the discrete case) which is Kreiss bounded and such that $\|T_t\| = \mathcal{O}(t^{1-\epsilon})$.
\end{remark}

As in Section \ref{Section3}, we can prove the following discrete analogue (a similar result had been obtained in \cite[Theorem 2]{KornKosek}) of Theorem \ref{maintheoremAL}.

\begin{thm}\label{maintheoremALdis}
	Let $T$ be an individually eventually positive Kreiss bounded operator on an $(\text{AL})$-space.
    Then, there exists $\epsilon \in (0,1)$ such that
	$$
	\norm{T^n} = \mathcal{O}(n^{1-\epsilon}).
	$$
\end{thm}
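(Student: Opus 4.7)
The plan is to follow the proof of Theorem \ref{maintheoremAL} line by line, replacing integrals with sums. The three ingredients are the discrete analogue of Proposition \ref{propequiv}, the additive AL-property \eqref{additiveAL}, and the discrete analogue of Lemma \ref{norme2}.

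First I would invoke the discrete analogue of Proposition \ref{propequiv} mentioned just before Theorem \ref{maintheoremdiscrete}: an individually eventually positive operator on a Banach lattice is Kreiss bounded if and only if it is Ces\`aro bounded. This yields a constant $C \geq 0$ such that $\bigl\| \sum_{k=0}^{n-1} T^k \bigr\| \leq Cn$ for every $n \geq 1$. Fix $x \in X_+$ and let $n_x \in \mathbb{N}$ be such that $T^k x \in X_+$ for all $k \geq n_x$, and set $M_x := \sup_{0 \leq k \leq n_x} \|T^k\|$. By \eqref{additiveAL}, for $n > n_x$,
\[
\sum_{k=n_x}^{n-1} \|T^k x\| = \Bigl\| \sum_{k=n_x}^{n-1} T^k x \Bigr\| \leq \Bigl\| \sum_{k=0}^{n-1} T^k x \Bigr\| + \Bigl\| \sum_{k=0}^{n_x-1} T^k x \Bigr\| \leq (C + n_x M_x)\, n\, \|x\|.
\]
Adding the first $n_x$ terms $\sum_{k=0}^{n_x-1} \|T^k x\| \leq n_x M_x \|x\|$ and handling $n \leq n_x$ trivially, we obtain a constant $K_x > 0$ with $\sum_{k=0}^{n-1} \|T^k x\| \leq K_x n \|x\|$ for every $n \geq 1$. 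Since $X_+$ spans $X$, the same inequality (with a possibly different $K_x$) holds for every $x \in X$.

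Next I would prove a discrete analogue of Lemma \ref{norme2}. From the submultiplicativity $\|T^n x\| \leq \|T^{n-j}\| \, \|T^j x\|$, summing over $j = 0, \ldots, n-1$ gives
\[
\|T^n x\| \sum_{i=0}^{n-1} \frac{1}{\|T^i\|} \leq \sum_{j=0}^{n-1} \|T^j x\| \leq K_x\, n\, \|x\|,
\]
assuming (as in the proof of Lemma \ref{norme2}) that all $\|T^i\| > 0$, otherwise the conclusion is trivial. The uniform boundedness principle then yields a constant $C' > 0$ such that $\|T^n\| \sum_{i=0}^{n-1} \|T^i\|^{-1} \leq C' n$ for every $n$, and \cite[Lemma 3.2]{Cuny1} — the very statement invoked in the proof of Lemma \ref{norme2} — gives $\|T^n\| = \mathcal{O}(n \cdot e^{-F(n)/C'})$ where $F(n) \sim \log n$. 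Hence $\|T^n\| = \mathcal{O}(n^{1 - 1/C'})$, and we set $\epsilon := \min(1/C', 1/2) \in (0,1)$.

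The main conceptual work has already been done in the continuous setting, and since the proof of Lemma \ref{norme2} itself reduces the continuous estimate to the discrete sequence $(\|T^n\|)_n$ via \cite[Lemma 3.2]{Cuny1}, no genuinely new obstacle arises. The only point requiring care is the discrete version of Proposition \ref{propequiv}, which is mentioned in the paper and whose proof is a routine adaptation of Section \ref{Section2} (the corresponding discrete statements being standard, see \cite{Cuny1, Cuny2}).
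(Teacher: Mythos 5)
Your proposal is correct and is exactly the adaptation the paper has in mind: the paper states Theorem \ref{maintheoremALdis} without proof, asserting it follows by discretizing the argument for Theorem \ref{maintheoremAL}, and you carry out precisely that discretization (discrete Ces\`aro boundedness from eventual positivity plus Kreiss, the $(\text{AL})$-additivity to convert the Ces\`aro bound into $\sum_{k<n}\|T^kx\|\leq K_x n\|x\|$, and the summation/\cite[Lemma 3.2]{Cuny1} step underlying Lemma \ref{norme2}). The only points to tidy are cosmetic: the index range in $\sum_i \|T^i\|^{-1}$ shifts by one depending on whether $j$ runs from $0$ or $1$, and your final choice $\epsilon=\min(1/C',1/2)$ correctly handles the degenerate case $C'=1$.
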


\begin{remark}\label{realcase}
It is clear from the proof of Theorem \ref{maintheoremAL} that if $X$ is a real $(\text{AL})$-space and $(T_t)_{t\ge 0}$ an individually eventually positive and Ces\`aro bounded $C_0$-semigroup on $X$, the conclusion that $
			\norm{T_t} = \mathcal{O}(t^{1-\epsilon})
			$
			remains true. The same remark applies to the discrete case in Theorem \ref{maintheoremALdis}.
\end{remark}

%It is known that every $(\text{AM})$-space is isomorphic (as Banach lattice) to %a closed subspace of a with unit.

We can now prove a similar estimate for the growth rate of $C_0$-semigroups on $(\text{AM})$-spaces. We thank Johen Glück for his precious help in devising this proof.
		
\begin{thm}\label{maintheoremAM}
	Let $X$ be an $(\text{AM})$-space and let $(T_t)_{t\ge 0}$ be a uniformly eventually positive and Kreiss bounded $C_0$-semigroup on $X$.
	Then, there exists $\epsilon \in (0,1)$ such that, as $t\to +\infty$,
	$$
	\norm{T_t} = \mathcal{O}(t^{1-\epsilon}).
	$$
\end{thm}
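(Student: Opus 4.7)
The plan is to estimate $\|T_t\| = \|T_t^*\|$ by running a variant of the proof of Theorem \ref{maintheoremAL} on the adjoint semigroup on the $(\text{AL})$-space $X^*$. The obstacle is that $(T_t^*)_{t \ge 0}$ is in general only weak-$*$ continuous, so Theorem \ref{maintheoremAL} does not apply directly; the heart of the proof is to establish the integral bound
\[
\int_{0}^{t} \|T_s^* x^*\|\, ds \le K t \|x^*\|, \qquad x^* \in X^*,\ t\ge 1,
\]
and then carry out a version of the argument of Lemma \ref{norme2} for the adjoint.

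To prove the integral bound, one reduces via the real/imaginary and positive/negative decomposition of $x^*$ to the case $x^* \in X^*_+$. Let $t_0$ be such that $T_t \ge 0$ for all $t \ge t_0$. For $s \ge t_0$ and $x^* \in X^*_+$, one has $\|T_s^* x^*\| = \sup\{\langle x^*, T_s x\rangle : x\in X_+,\ \|x\|\le 1\}$, and $s\mapsto \|T_s^* x^*\|$ is lower semi-continuous, hence Borel measurable. The crucial trick is to exploit the $(\text{AM})$ identity $\|\bigvee_k x_k\|=\max_k\|x_k\|$: given a partition $t_0 = s_0 < \cdots < s_N = t$ with sample points $\xi_k \in [s_k,s_{k+1}]$ chosen at minimizers of $\|T_s^* x^*\|$ on each subinterval (so that the corresponding lower Darboux sum approximates $\int_{t_0}^t\|T_s^* x^*\|\,ds$), for each $k$ pick $x_k \in X_+$ with $\|x_k\|\le 1$ nearly realizing $\|T_{\xi_k}^* x^*\|$, and set $x := \bigvee_{k=0}^{N-1} x_k \in X_+$. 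The $(\text{AM})$ identity gives $\|x\|\le 1$, and positivity of $T_{\xi_k}$ (since $\xi_k \ge t_0$) yields $T_{\xi_k} x \ge T_{\xi_k} x_k$, whence $\langle x^*, T_{\xi_k} x\rangle \ge \|T_{\xi_k}^* x^*\| - \varepsilon$ simultaneously for all $k$. Summing over $k$ and comparing with Riemann sums of the continuous function $s\mapsto \langle x^*, T_s x\rangle$ reduces the matter to bounding $\int_{t_0}^t \langle x^*, T_s x\rangle \, ds \le \|x^*\|\,\bigl\|\int_{t_0}^t T_s x\, ds\bigr\| \le C' t\|x^*\|$, which follows from the Ces\`aro boundedness of $(T_t)$ provided by Proposition \ref{propequiv}. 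Refining the partition yields the integral bound.

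Once this is in hand, the proof of Lemma \ref{norme2} carries over to the adjoint. By Borel measurability of $s\mapsto \|T_s^* x^*\|$, for each $n \in \mathbb{N}$ there is $t_n\in[n,n+1]$ with $\|T_{t_n}^* x^*\| \le \int_n^{n+1}\|T_s^* x^*\|\,ds$. The semigroup identities $T_{t_n}^* = T_{t_n-t_{n-j}}^* T_{t_{n-j}}^*$ and $T_{n+1}^* = T_{n+1-t_n}^* T_{t_n}^*$, combined with $\|T_t^*\|=\|T_t\|$ and the local bound $\|T_{t_n-t_{n-j}}\| \le M\|T_{j-1}\|$ (with $M = \sup_{u\in[0,2]}\|T_u\|$), yield $\|T_{n+1}^* x^*\| \le M^2\|T_{j-1}\|\|T_{t_{n-j}}^* x^*\|$ for $1\le j \le n$. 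Dividing by $\|T_{j-1}\|$, summing over $j$, applying the uniform boundedness principle, and then \cite[Lemma 3.2]{Cuny1} exactly as in the proof of Theorem \ref{maintheoremAL} with $f(t)=t$, gives $\|T_{n+1}\| = \mathcal{O}(n^{1-1/C})$, so $\|T_t\| = \mathcal{O}(t^{1-\epsilon})$ with $\epsilon = 1/C$.

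The main difficulty is the integral bound for the adjoint: since $(T_t^*)$ is not a $C_0$-semigroup in general, one cannot quote Theorem \ref{maintheoremAL} directly, and naive duality has to contend with the fact that the norm functional of the $(\text{AL})$-space $X^*$ need not be represented by an element of $X$. The $(\text{AM})$-identity for finite positive suprema is precisely what allows one to turn a ``supremum over $x$ of integrals'' into a ``single $x$ with a single integral'', transferring the Ces\`aro estimate on $X$ into an $L^1$-type estimate on the orbits of the dual semigroup.
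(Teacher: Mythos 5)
Your strategy is genuinely different from the paper's: you dualize ``by hand'' to the $(\text{AL})$-space $X^*$, using the finite $(\text{AM})$-identity to merge near-maximizers into a single test vector, whereas the paper discretizes (it sets $T=T_1$, shows $T$ is Ces\`aro bounded as a single operator, and then applies the discrete $(\text{AL})$-result, Theorem \ref{maintheoremALdis}, to $T^*$ on $X^*$). However, your argument has a genuine gap at the step ``summing over $k$ and comparing with Riemann sums of the continuous function $s\mapsto\langle x^*,T_sx\rangle$.'' The vector $x=\bigvee_k x_k$ depends on the partition $P$, and what you must bound \emph{uniformly over $P$} is the tagged sum $R_P=\sum_k(s_{k+1}-s_k)\langle x^*,T_{\xi_k}x^{(P)}\rangle$. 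The error $\bigl|R_P-\int_{t_0}^t\langle x^*,T_sx^{(P)}\rangle\,ds\bigr|$ is governed by the modulus of continuity of $s\mapsto T_sx^{(P)}$ on $[t_0,t]$, and since $(T_t)_{t\ge0}$ is only strongly continuous this modulus is not uniform over the unit ball: $\sup_{\norm{z}\le1}\norm{T_uz-T_vz}=\norm{T_u-T_v}$ need not tend to $0$ as $|u-v|\to0$ (for the translation semigroup on $L^1(\mathbb{R})$ it equals $2$ for all $u\neq v$). As you refine the partition the test vector changes with it, so the error term does not vanish in the limit, and the reduction to $\norm{\int_{t_0}^tT_sx\,ds}\le Ct$ is not justified as written.

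The gap is repairable, but the repair is essentially the paper's key device, which you do not use: in an $(\text{AM})$-space every relatively compact subset of $X$ has a supremum (\cite[Proposition II.7.6]{Schaefer}). Setting $y:=\sup\{T_ux^{(P)}:u\in[t_0,t_0+1]\}$, one checks $\norm{y}\le\sup_u\norm{T_ux^{(P)}}\le M$ (approximate $y$ by suprema of finite nets and use the $(\text{AM})$-identity), and for $\xi_k\ge2t_0+1$ one has $0\le T_{\xi_k}x^{(P)}=T_sT_{\xi_k-s}x^{(P)}\le T_sy$ for every $s\in[\xi_k-t_0-1,\xi_k-t_0]$, hence $T_{\xi_k}x^{(P)}\le\int_{\xi_k-t_0-1}^{\xi_k-t_0}T_sy\,ds$. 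Summing with weights $s_{k+1}-s_k$ and noting that each $s$ is covered by subintervals of total weight at most $1+2\,\mathrm{mesh}(P)$ dominates $\sum_k(s_{k+1}-s_k)T_{\xi_k}x^{(P)}$ by a fixed multiple of $\int_{t_0}^{t}T_sy\,ds$, whose norm is $\mathcal{O}(t)$ by Ces\`aro boundedness (Proposition \ref{propequiv}); pairing with $x^*$ then bounds $R_P$ uniformly. The paper avoids the Riemann-sum issue altogether by applying this same domination directly to the integer-time sums $\sum_kT^kx$. The remaining parts of your proposal (lower semicontinuity and measurability of $s\mapsto\norm{T_s^*x^*}$, the reduction to positive $x^*$, and the adjoint version of Lemma \ref{norme2} via the mean-value choice of $t_n$, $\norm{T_s^*}=\norm{T_s}$ and \cite[Lemma 3.2]{Cuny1}) are correct.
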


	\begin{proof}
	Let $T:= T_1$. First, we show that $T$ is Ces\`aro bounded (as a bounded operator), that is 
	$$
	\exists C>0, \ \forall x \in X, \ \forall N\in \N, \ \norm{\sum_{k=1}^N T^kx} \le CN \|x\|.
	$$
	 	Let $t_0 \geq 0$ be such that for every $t\geq t_0, T_t$ is a positive operator. Let $x \in X_+$. By \cite[Proposition II.7.6]{Schaefer}, every nonempty relatively compact subset of $X$ has a supremum in $X$, so
	 	 the compact set $\{T_tx, t\in [t_0;t_0+1] \} \subset X_+$ has a supremum $y\in X_+$. 
	 Let $N_0\in \N$ be such that $N_0 \geq t_0 +1 $ and $k\in \N$ be such that $ k \geq N_0$. Notice that for every $s\in [k-t_0-1, k - t_0 ]$, we have 
	 $$
	 0 \leq T_kx = T_sT_{k-s}x \leq T_sy,
	 $$
	 from which we deduce that
	 $$
	  0 \leq T^kx = T_kx = \int_{k-t_0-1}^{k-t_0}T_kx ds \leq \int_{k-t_0-1}^{k-t_0} T_sy ds.
	 $$
	 Let $N \ge N_0 $. We sum the last inequality over $k=N_0,\ldots, N$ to obtain
	 $$
	 0\leq \sum_{k= N_0}^{N} T^kx \leq  \sum_{k= N_0}^{N} \int_{k-t_0-1}^{k - t_0 } T_sy ds = \int_{N_0-t_0-1}^{N - t_0 } T_sy ds.
	 $$
	  Hence, by Cesàro boundedness of $(T_t)_{t\ge0}$ and similar computations as in the proof of Theorem \ref{maintheoremAL}, there exists a constant $K$ such that
	  	$$
	  \norm{\sum_{k=N_0}^N T^kx} \leq \norm{\int_{N_0-t_0-1}^{N-t_0}T_{s}yds} \le KN\norm{y}.
	  $$ 
	  From this inequality, we easily deduce that
	  \begin{align*}
	  \sup_{N\geq 1} \dfrac{1}{N}  \norm{\sum_{k=1}^N T^kx} < +\infty.
	  \end{align*}
	  The same holds true for a general $x\in X$ so, by the uniform boundedness principle, we conclude that $T$ is Ces\`aro bounded.

Now, note that $T^*$ is also Ces\`aro bounded and uniformly eventually positive on $X^*$ which is an $(\text{AL})$-space. Hence, by Theorem \ref{maintheoremALdis}, $\norm{T^n} = \norm{(T^*)^n} = \mathcal{O}(n^{1-\epsilon})$. Since $T=T_1$, it is now straightforward to see that we have the desired growth rate for $T_t$.

	  %Now, the real part of $T$, denoted by $R$, is also Ces\`aro bounded and individually eventually positive on $X_{\mathbb{R}}$, which is a real $(\text{AM})$-space with unit. Its adjoint is also Ces\`aro bounded and individually eventually positive on the dual of $X_{\mathbb{R}}$. The dual of $X_{\mathbb{R}}$ is an $(\text{AL})$-space, so, by Remark \ref{realcase}, $\norm{R^n} = \norm{(R^*)^n} = \mathcal{O}(n^{1-\epsilon})$ for some $\epsilon \in (0,1)$. But $T$ is uniquely determined by $R$, so $T$ also satisfies the estimate $\norm{T^n} = \mathcal{O}(n^{1-\epsilon})$. Since $T=T_1$, it is now straightforward to see that we have the desired growth rate for $T_t$.
	  \end{proof}

%	\begin{remark}
%		The paper \cite{Vogt} shows that every uniformly eventually positive %$C_0$-semigroups $(T_t)_{t\ge 0}$ with generator $-A$ satisfies $s(-A) %\leq \omega_0(T)$ (see the introduction for the relevant definitions). %Another proof of this result can be found in \cite{Weis98} and the %techniques used therein can also be applied to obtain inequality (\ref{propEgalAlmpACB}).
%	\end{remark}

	\noindent \textbf{Acknowledgments.} The first author was supported by the ERC
	grant {\it Rigidity of groups and higher index theory} under the European Union’s
	Horizon 2020 research and innovation program (grant agreement no. 677120-INDEX).
	
	The authors wish to thank Christophe Cuny for valuable comments, suggestions and discussions at early stages of the preparation of this paper.
 %and Johen Glück for having brought to our attention the proof of Theorem \ref{maintheoremAM}. 
 
 The authors are indebted to an anonymous reviewer for his careful reading of the paper, his valuable comments and for having brought to our attention several references which allowed to improve the paper.

%and for suggesting the idea of considering the more general $(\text{AL})$ and $(\text{AM})$-spaces.

\end{document}